\numberwithin{equation}{section}
\newtheorem{theorem}{Theorem}
\newtheorem{lemma}[theorem]{Lemma}
\newtheorem{corollary}[theorem]{Corollary}
\newtheorem{proposition}[theorem]{Proposition}
\theoremstyle{remark}
\newtheorem*{remark}{Remark}
\theoremstyle{definition}
\numberwithin{theorem}{section} 
\numberwithin{equation}{section}
\numberwithin{table}{section}
\newtheorem{thma}{Theorem}
\newcommand{\R}{\mathbb{R}}
\newcommand{\C}{\mathbb{C}}
\newcommand{\K}{\mathbb{K}}
\newcommand{\Q}{\mathbb{Q}}
\DeclareMathOperator{\Res}{Res}
\renewcommand{\(}{\left(}
\renewcommand{\)}{\right)}
\renewcommand{\[}{\left[}
\renewcommand{\]}{\right]}
\begin{document}
\title[Piltz divisor problem over number fields \`a la Vorono\"i]{Piltz divisor problem over number fields \`a la Vorono\"i}
\author{Soumyarup Banerjee}
\address{\rm Department of Mathematics, University of Hong Kong, Pokfulam, Hong Kong}
\email{soumya.tatan@gmail.com}
\subjclass[2010] {11R42, 11R11, 11S40, 33C60}

\keywords{Piltz divisor problem, Dedekind zeta function, Special function, Riesz sum}

\medskip
\begin{abstract}
In this article, we study the {\rm Piltz divisor problem}, which is sometimes called the
{\rm generalized Dirichlet divisor problem}, over number fields. We establish an
identity akin to Vorono\"i's formula concerning the error term in the Dirichlet divisor problem. 
\end{abstract}

\thanks{The research of the author was supported by grants from the Research Grants Council of the Hong Kong SAR, China.}
\maketitle

\section{Introduction}
The asymptotic behavior of arithmetic functions has long been a fascinating subject in analytic number theory. In particular, one often investigates the behaviour of $a(n)$ as $n$ increases. A common technique to understand arithmetic functions involves studying the partial sums $\sum_{n\leq x}a(n)$. For example, Dirichlet famously estimated the asymptotic behaviour of the partial sums $\sum_{n\leq x} d(n)$ by relating it to the problem of counting the number of lattice points lying inside or on the hyperbola. 
Here $d(n)$ denotes the divisor function i.e, $d(n) = \sum_{d\mid n} 1$. He obtained an asymptotic formula with the main term $x\log x + (2\gamma - 1)x + \frac{1}{4}$, where $\gamma$ is the Euler-Mascheroni constant and an error term of order $\sqrt{x}$. The problem of estimating the error term between the sum $\sum_{n\leq x} d(n)$ and the main term is known as the Dirichlet hyperbola problem or the Dirichlet divisor problem. The bound on the error term has been further improved by many mathematicians.

Vorono\"i \cite{Voronoi} introduced a new phase into the Dirichlet divisor problem. He was able to express the error term as an infinite series containing the Bessel functions. 
More precisely, letting $Y_1$ (resp. $K_1$) denote the Bessel function of the second kind (resp. modified Bessel function of second kind) and $\gamma$ denote the Euler-Mascheroni constant, Vorono\"i obtained the following.
\begin{thma}[Vorono\"i identity]
For every $x > 0$, we have
\begin{equation*}
\sideset{}{'}\sum_{n\le x}\!\!d(n) =x\log x+(2\gamma-1)x+\frac{1}{4} - \sum_{k=1}^\infty\frac{d(k)}{k}
\left(Y_1\left(4\pi\,\sqrt{xk}\,\right)+\frac{2}{\pi}K_1\left(4\pi\,\sqrt{xk}\,\right)\right)\sqrt{xk},
\end{equation*}
where $\sum'$ means that the term corresponding to $n=x$ is halved.
\end{thma}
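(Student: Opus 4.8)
The plan is to proceed by the classical Mellin-transform method, exploiting that $d(n)$ is generated by $\zeta(s)^2$, together with the functional equation of $\zeta$ and a leftward contour shift that converts the remaining analytic contribution into a series of Bessel functions. First I would invoke Perron's formula: since $\sum_{n=1}^\infty d(n)n^{-s}=\zeta(s)^2$ converges absolutely for $\re(s)>1$, for a parameter $c>1$ one writes
\begin{equation*}
\sump_{n\le x} d(n) = \frac{1}{2\pi i}\int_{c-i\infty}^{c+i\infty}\zeta(s)^2\,\frac{x^s}{s}\,ds,
\end{equation*}
the primed sum with halved endpoint being precisely what the symmetric truncation of the Perron integral produces. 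To make this rigorous I would first work with the truncated integral $\int_{c-iT}^{c+iT}$, control the horizontal segments and the tails by standard convexity bounds for $\zeta$ in vertical strips, and pass to the limit $T\to\infty$ at the end.

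Next I would shift the line of integration from $\re(s)=c$ to a line $\re(s)=-\delta$ with $0<\delta<1$, collecting residues. The integrand $\zeta(s)^2 x^s/s$ has a double pole at $s=1$ coming from $\zeta(s)^2$ and a simple pole at $s=0$ coming from the factor $1/s$. Using the Laurent expansion $\zeta(s)=\tfrac{1}{s-1}+\gamma+\cdots$, the residue at $s=1$ yields exactly the main terms $x\log x+(2\gamma-1)x$, while the residue at $s=0$ equals $\zeta(0)^2=\tfrac14$; together these account for all the explicit terms in the statement.

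On the shifted contour, which now lies in the region $\re(1-s)>1$, I would apply the functional equation $\zeta(s)=\chi(s)\zeta(1-s)$ with $\chi(s)=2^s\pi^{s-1}\sin(\pi s/2)\Gamma(1-s)$, so that $\zeta(s)^2=\chi(s)^2\sum_{k=1}^\infty d(k)k^{s-1}$. After justifying the interchange of the now-convergent Dirichlet series with the integral, the remaining contribution becomes
\begin{equation*}
-\sum_{k=1}^\infty \frac{d(k)}{k}\cdot\frac{1}{2\pi i}\int_{(-\delta)}\chi(s)^2\,\frac{(xk)^s}{s}\,ds.
\end{equation*}
The final step is to evaluate this Mellin--Barnes integral in closed form: writing $\chi(s)^2=4^s\pi^{2s-2}\sin^2(\pi s/2)\,\Gamma(1-s)^2$ and invoking the standard Mellin--Barnes representations of the Bessel functions, one identifies the inner integral with $\sqrt{xk}\bigl(Y_1(4\pi\sqrt{xk})+\tfrac{2}{\pi}K_1(4\pi\sqrt{xk})\bigr)$, reproducing the Bessel series in the theorem.

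I expect the main obstacle to be the analytic bookkeeping rather than any single algebraic identity: namely, justifying the leftward contour shift to infinity (or the truncate-and-bound argument that replaces it), interchanging the infinite sum with the contour integral, and establishing convergence of the resulting Bessel series. This requires uniform control of $\chi(s)^2$ in the left half-plane via Stirling's formula together with the oscillatory decay of $Y_1$ and the exponential decay of $K_1$; by contrast, the evaluation of the Mellin--Barnes integral itself is a known special-function computation.
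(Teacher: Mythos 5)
Your formal skeleton is the classical one, and every algebraic ingredient is correct: the residue of $\zeta(s)^2x^s/s$ at the double pole $s=1$ gives $x\log x+(2\gamma-1)x$, the residue at $s=0$ gives $\zeta(0)^2=\tfrac14$, the functional equation converts the shifted integrand into $\chi(s)^2\sum_k d(k)k^{s-1}$, and the Mellin--Barnes integral of $\chi(s)^2(xk)^s/s$ does evaluate to $\sqrt{xk}\,\bigl(Y_1(4\pi\sqrt{xk})+\tfrac{2}{\pi}K_1(4\pi\sqrt{xk})\bigr)$. The genuine gap is the analytic step you flag but do not resolve, and it is not mere bookkeeping. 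On the line $\mathfrak{R}(s)=-\delta$ one has $|\chi(s)^2|\asymp |t|^{1+2\delta}$, so the integrand $\chi(s)^2(xk)^s/s$ \emph{grows} like $|t|^{2\delta}$: the shifted integral is not absolutely convergent (indeed not convergent in the ordinary sense at all), Fubini is unavailable for the interchange with the Dirichlet series, and ``uniform control of $\chi(s)^2$ via Stirling'' only quantifies this growth rather than curing it. The target Bessel series is itself only conditionally convergent (its terms have envelope $d(k)x^{1/4}k^{-3/4}$), so no absolute-convergence argument can close the proof as you have set it up. This is precisely the pitfall the Remark in \S\ref{Riesz sum} warns about, quoting Davenport, concerning the $0$th-order Perron formula.

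The paper supplies the missing idea by smoothing first: it works with the Riesz sum $\frac{1}{\Gamma(\rho+1)}\sideset{}{'}\sum_{n\le x}(x-n)^{\rho}d(n)$ for a sufficiently large integer $\rho$, whose Perron-type integrand carries the extra factor $1/\bigl(w(w+1)\cdots(w+\rho)\bigr)$. That factor forces absolute convergence on the shifted line, so the contour shift, the vanishing of the horizontal segments, and the term-by-term interchange are all legitimate; the original partial sum is then recovered by differentiating $\rho$ times in $x$ via Lemma \ref{Lemma Riesz sum}, after which the inner integrals are read off as Meijer $G$-functions and converted to $Y_1$ and $K_1$ by the connection formulae \eqref{K Y and J result}--\eqref{Y at negative arg}. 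To make your direct route rigorous you would need an equivalent smoothing-and-unsmoothing device (or a genuinely quantitative truncated-contour argument with stationary-phase control of the Bessel integrals); as written, the step ``interchange the now-convergent Dirichlet series with the integral'' fails.
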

A very natural generalization of the Dirichlet divisor problem is the determination of asymptotics for the partial sums $\sum_{n\leq x} d_k(n)$ where $d_k(n)$ counts the number of ways that $n$ can be written as a product of $k$ numbers. The problem of estimating the error term is known as the Piltz divisor problem, named in honor of 
of Adolf Piltz. An error term of Vorono\"i-type was previously obtained by the author and Wang \cite{Wang} for the shifted Piltz divisor problem; this is the problem of counting the number of lattice points lying inside or on the hyperbola after shifting the origin to a fixed coordinate.  In this article, we consider the Piltz divisor problem over number fields, which we next describe. Let $\K$ be a number field with extension degree $[\K : \Q] = d$ and signature $(r_1, r_2)$ (i. e., $d = r_1+2r_2$). We let $D_\K$ denote the absolute value of the discriminant of $\K$. Let $\mathcal{O}_\K$ be its ring of integers and $v_{\K}(m)$ denote the number of non-zero integral ideals in $\mathcal{O}_K$ with norm $m$. Let $\mathfrak{N}$ be the norm map of $\K$ over $\Q$. The main question considered in this paper is the problem of counting the number of $m$-tuples of ideals $(\mathfrak{a}_1, \mathfrak{a}_2, \ldots, \mathfrak{a}_m)$ for which the product of their ideal norms
 $\mathfrak{N}_{\mathfrak{a}_1}\mathfrak{N}_{\mathfrak{a}_2}\cdots \mathfrak{N}_{\mathfrak{a}_m}$ is at most $x$. In other words, we study the asymptotic behaviour of the partial sum
$$
I_\K^m(x) = \sideset{}{'}\sum_{n \leq x} v_{\K}^m(n)
$$
where $v_{\K}^m(n)$  counts the number of m-tuples of ideals $(\mathfrak{a}_1, \mathfrak{a}_2, \ldots, \mathfrak{a}_m)$ with $\mathfrak{N}_{\mathfrak{a}_1}\mathfrak{N}_{\mathfrak{a}_2}\cdots \mathfrak{N}_{\mathfrak{a}_m} = n$. We denote the main term and the error term of $I_\K^m(x)$ by  $P_\K^m(x)$ and $\Delta_\K^m(x)$, 
respectively. It is known that the main term is $P_\K^m(x) = \Res_{s=1} \zeta_{\K}(s)^m x^s s^{-1}$, where $\zeta_\K(s)$ denotes the Dedekind zeta function of the number field $\K$. The main term can be obtained by a standard procedure using complex analysis. The main goal of this article is to estimate the error term $\Delta_\K^m(x)$ in terms of special functions.

The asymptotic behaviour of the partial sum $I_\Q^m(x)$ is well studied, with many mathematicians having already investigated this sum as early as the 18th century. The best known result so far for $\Delta_\Q^2(x)$ is $x^{517/1648+\epsilon}$ \cite{Bourgain}. The partial sum $I_\K^m(x)$ with $m=1$ is the ordinary ideal counting function over $\K$ and there are numerous investigations of the asymptotics for these dating back to the 19th century. The best known result can be found in \cite{Takeda} 
or \cite{Krishnarjun}. In comparison to other divisor problems, it seems that very few improvements [c.f \cite{Nowak} \cite{Takeda}, \cite{Krishnarjun}] have been made on the 
Piltz divisor problem over number fields. In this article, we obtain a Vorono\"i-type identity for Piltz divisor problem over number fields. In particular, we express the error term in terms of an infinite series containing the ``Meijer G-function". Moreover, we discuss certain special cases in another section where the error terms may be written 
in terms of different special functions.  
\begin{theorem}\label{Theorem 1}
Let $\K$ be any number field of degree $d$ with signature $(r_1, r_2)$ and discriminant $d_\K$ {\rm(}with $|d_{\K}|=D_{\K}${\rm)}. Then
\begin{align}
I_\K^m(x) =\zeta_\K(0)^m + H_m x &- \frac{i^{mr_1}D_\K^{m/2}}{(2\pi)^{mr_1+mr_2}}\sum\limits_{j=0}^{mr_1}(-1)^j                \(\begin{matrix}
mr_1\\ j
\end{matrix}\)
\sum\limits_{n=1}^\infty \frac{v_\K^m(n)}{n} \nonumber\\ 
&\times G_{0, \,\ md}^{m(r_1+r_2), 0}\(\begin{matrix}
- \\  \boldsymbol{1}_{mr_1+mr_2-1}, 0, \boldsymbol{1}_{mr_2}
\end{matrix}
\bigg| (e^{\frac{i\pi}{2}})^{2j-mr_1} \frac{(2\pi)^{dm}}{D_{\K}^m} nx\),
\end{align}
where $\boldsymbol{1}_\ell$ denotes the $\ell$-tuple all of whose entries equal $1$ and $H_m =\underset{s=1} \Res \, \ \zeta_\K(s)^m$.
\end{theorem}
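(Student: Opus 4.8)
The plan is to run a Vorono\"i-type argument resting on three ingredients: the Mellin--Barnes (Perron) representation of $I_\K^m(x)$, the functional equation of $\zeta_\K$, and the Mellin--Barnes integral defining the Meijer $G$-function. Since $v_\K^m(n)$ is the Dirichlet coefficient of $\zeta_\K(s)^m=\sum_{n\ge 1}v_\K^m(n)\,n^{-s}$ (absolutely convergent for $\re(s)>1$), Perron's formula yields
\begin{equation*}
I_\K^m(x)=\frac{1}{2\pi i}\int_{(c)}\zeta_\K(s)^m\,\frac{x^s}{s}\,ds,\qquad c>1,
\end{equation*}
the $\sideset{}{'}\sum$ convention handling the contribution of $n=x$. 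I would then shift the contour to a line $\re(s)=c'<0$. In doing so one crosses the pole of order $m$ of $\zeta_\K(s)^m$ at $s=1$, whose residue is the main term $H_m x$, and the simple pole of $x^s/s$ at $s=0$, whose residue is $\zeta_\K(0)^m$; together these give the two elementary terms in the statement, so that the error term equals $\frac{1}{2\pi i}\int_{(c')}\zeta_\K(s)^m\,x^s s^{-1}\,ds$.

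On the shifted line I would invoke the functional equation $\zeta_\K(s)=\Delta(s)\,\zeta_\K(1-s)$, where
\begin{equation*}
\Delta(s)=D_\K^{\frac12-s}\left(\pi^{\,s-\frac12}\frac{\Gamma\!\left(\frac{1-s}{2}\right)}{\Gamma\!\left(\frac{s}{2}\right)}\right)^{r_1}\left((2\pi)^{2s-1}\frac{\Gamma(1-s)}{\Gamma(s)}\right)^{r_2}.
\end{equation*}
Writing $\zeta_\K(s)^m=\Delta(s)^m\zeta_\K(1-s)^m$ and expanding $\zeta_\K(1-s)^m=\sum_{n\ge1}v_\K^m(n)\,n^{s-1}$ (now convergent, since $\re(1-s)>1$), I would interchange summation and integration to reduce the error term to $\sum_{n\ge1}\frac{v_\K^m(n)}{n}\,J(nx)$, where $J(y)=\frac{1}{2\pi i}\int_{(c')}\Delta(s)^m\,\frac{y^s}{s}\,ds$.

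The algebraic core is the evaluation of $J(y)$ as a combination of Meijer $G$-functions. Here I would first clear the half-integral arguments produced by the $r_1$ real places: the reflection formula $\Gamma(\frac s2)\Gamma(1-\frac s2)=\pi/\sin(\frac{\pi s}{2})$ together with Legendre's duplication formula gives
\begin{equation*}
\left(\frac{\Gamma\!\left(\frac{1-s}{2}\right)}{\Gamma\!\left(\frac{s}{2}\right)}\right)^{mr_1}=\pi^{-mr_1/2}\,2^{mr_1 s}\,\sin^{mr_1}\!\left(\frac{\pi s}{2}\right)\Gamma(1-s)^{mr_1},
\end{equation*}
while the factor $1/s$ is absorbed through $1/s=-\Gamma(-s)/\Gamma(1-s)$. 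After collecting the powers of $D_\K$, $\pi$ and $2$ into a single base $w=(2\pi)^{md}D_\K^{-m}y$, one is left with
\begin{equation*}
J(y)=-\frac{D_\K^{m/2}}{\pi^{mr_1}(2\pi)^{mr_2}}\,\frac{1}{2\pi i}\int_{(c')}\frac{\Gamma(1-s)^{mr_1+mr_2-1}\,\Gamma(-s)}{\Gamma(s)^{mr_2}}\,\sin^{mr_1}\!\left(\frac{\pi s}{2}\right)w^s\,ds.
\end{equation*}
Finally, expanding $\sin^{mr_1}(\frac{\pi s}{2})=(-2i)^{-mr_1}\sum_{j=0}^{mr_1}(-1)^j\binom{mr_1}{j}e^{\,i\pi s(2j-mr_1)/2}$ splits this into $mr_1+1$ integrals whose kernels are precisely the Mellin--Barnes representation of $G^{m(r_1+r_2),0}_{0,md}$ with the parameter list $\boldsymbol{1}_{mr_1+mr_2-1},0,\boldsymbol{1}_{mr_2}$, the exponential $e^{i\pi s(2j-mr_1)/2}$ rotating the argument to $(e^{i\pi/2})^{2j-mr_1}(2\pi)^{dm}D_\K^{-m}nx$. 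Tracking the constants, the combined prefactor $-\,D_\K^{m/2}\pi^{-mr_1}(2\pi)^{-mr_2}(-2i)^{-mr_1}$ collapses to $-\,i^{mr_1}D_\K^{m/2}(2\pi)^{-mr_1-mr_2}$, which is exactly the factor in the statement.

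The step I expect to be the main obstacle is the analytic justification rather than this formal identity. Shifting the contour, interchanging $\sum_n$ with the integral, and---crucially---guaranteeing that the resulting Meijer $G$-series converges all require uniform control of $\zeta_\K(s)^m$ in vertical strips (convexity bounds and the Phragm\'en--Lindel\"of principle govern the horizontal connecting segments), together with the observation that on $\re(s)=c'$ the Dirichlet series $\zeta_\K(1-s)^m$ is bounded, so that convergence hinges on the marginal, exponential-order cancellation between $\sin^{mr_1}(\frac{\pi s}{2})$ and the quotient of gamma functions. Pinning down the large-$n$ asymptotics of $G^{m(r_1+r_2),0}_{0,md}$ and checking that they outpace the mean size of $v_\K^m(n)$ is the delicate point, and is precisely where the degree $d$ and the signature $(r_1,r_2)$ enter quantitatively; it is also where the manipulation must be arranged (for instance via a truncated contour) so that the exchange of limits becomes rigorous.
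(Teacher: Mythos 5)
Your formal computation---the functional equation, reflection plus duplication to turn $\Gamma(\tfrac{1-s}{2})^{mr_1}/\Gamma(\tfrac{s}{2})^{mr_1}$ into $\sin^{mr_1}(\tfrac{\pi s}{2})\Gamma(1-s)^{mr_1}$, absorption of $1/s$ via $-\Gamma(-s)/\Gamma(1-s)$, the binomial expansion of the sine power, and the collapse of the prefactor to $-i^{mr_1}D_\K^{m/2}(2\pi)^{-mr_1-mr_2}$---coincides exactly with the paper's and is correct. The gap is in the analytic framework, precisely the point you flag in your last paragraph without resolving. Starting from the unsmoothed Perron integral $\frac{1}{2\pi i}\int_{(c)}\zeta_\K(s)^m x^s s^{-1}\,{\rm d}s$ and shifting to $\re(s)=c'<0$ does not work: by the convexity bound (Lemma \ref{Convexity bound}), $|\zeta_\K(c'+it)|^m$ grows like $|t|^{md(1/2-c')}$, so the integrand on the shifted line has size $|t|^{md(1/2-c')-1}$, which tends to infinity for $md\ge 2$. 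The vertical integral therefore does not converge, the horizontal connecting segments do not vanish, and Fubini cannot be invoked to interchange $\sum_n$ with the integral. The paper's Remark in \S\ref{Riesz sum} explicitly warns that the $0$th-order Perron formula is unusable here for exactly this reason, and a truncated contour would only produce an approximate formula with an error term, not the exact identity claimed.

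The missing idea is the Riesz-sum smoothing that forms the backbone of the paper's proof: one works with $I_\K^{m,\rho}(x)=\frac{1}{\Gamma(\rho+1)}\sideset{}{'}\sum_{n\le x}(x-n)^\rho v_\K^m(n)$ for an integer $\rho\ge \frac{md}{2}(1-\mu)+1$, where $-1<\mu<0$ is the abscissa of the shifted line. The generalized Perron kernel $\Gamma(w)/\Gamma(w+\rho+1)\ll |t|^{-\rho-1}$ then beats the convexity growth, so the horizontal integrals vanish, the vertical integral converges absolutely, and the sum--integral interchange is legitimate. Differentiating $\rho$ times in $x$ (Lemma \ref{Lemma Riesz sum}) descends back to $I_\K^m(x)$, the kernel becomes $x^w/w$, and the Meijer $G$-function appears exactly as in your algebra. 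A secondary point: your claim that the residue at the order-$m$ pole $s=1$ is $H_m x$ with $H_m=\Res_{s=1}\zeta_\K(s)^m$ reproduces the statement's own shorthand, but for $m\ge 2$ the residue of $\zeta_\K(s)^m x^s/s$ is $x$ times a degree-$(m-1)$ polynomial in $\log x$, as the paper's Theorem \ref{Theorem 2} makes explicit; this should be tracked rather than asserted.
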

Steen introduced a new function in \cite{Steen} that naturally appears when investigating the general divisor problem; we call this function the Vorono\"i-Steen function, and its definition may be found in \S \ref{sec:specialfunctions}. It reduces to a modified Bessel function of the second kind as a special case. In the special case that the number field $\K$ is totally real, the error term can be expressed as an infinite series containing the Vorono\"i-Steen function.
\begin{corollary}\label{Corollary 1.2}
Let $\K$ be any totally real number field of degree $d$ with discriminant $d_\K$. Then
\begin{equation}
I_\K^m(x) =\zeta_\K(0)^m + H_m x - \frac{i^{md}D_\K^{m/2}}{(2\pi)^{md}}\sum\limits_{j=0}^{md}(-1)^j \binom{md}{j}\sum\limits_{n=1}^\infty \frac{v_\K^m(n)}{n}  
 \, V\( (e^{\frac{i\pi}{2}})^{2j-md} \frac{(2\pi)^{dm}}{D_{\K}^m} nx ; \boldsymbol{1}_{md-1}, 0 \).
\end{equation}
\end{corollary}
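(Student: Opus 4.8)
The plan is to obtain Corollary~\ref{Corollary 1.2} as a direct specialization of Theorem~\ref{Theorem 1} to the totally real situation, followed by recognizing the resulting Meijer $G$-function as the Vorono\"i-Steen function $V$. First I would record that a totally real number field of degree $d$ has signature $(r_1,r_2)=(d,0)$. Setting $r_2=0$ and $r_1=d$ in Theorem~\ref{Theorem 1} simplifies every scalar ingredient simultaneously: the exponent $mr_1+mr_2$ becomes $md$, so the prefactor $\frac{i^{mr_1}D_\K^{m/2}}{(2\pi)^{mr_1+mr_2}}$ collapses to $\frac{i^{md}D_\K^{m/2}}{(2\pi)^{md}}$; the summation index $j$ now runs from $0$ to $md$ with weight $\binom{md}{j}$; and the argument of the $G$-function becomes $(e^{\frac{i\pi}{2}})^{2j-md}\frac{(2\pi)^{dm}}{D_\K^m}\,nx$, exactly as displayed in the statement.

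Next I would simplify the parameter data of the Meijer $G$-function. The upper order $m(r_1+r_2)$ reduces to $md$, while in the list of lower parameters $\boldsymbol{1}_{mr_1+mr_2-1},0,\boldsymbol{1}_{mr_2}$ the final block $\boldsymbol{1}_{mr_2}=\boldsymbol{1}_0$ is the empty tuple and hence drops out, leaving precisely $\boldsymbol{1}_{md-1},0$. Thus the Meijer $G$-function appearing in Theorem~\ref{Theorem 1} specializes to $G^{md,0}_{0,\,md}$ with empty upper parameter list and lower parameters $\boldsymbol{1}_{md-1},0$, and this specialization takes place termwise inside the absolutely convergent series, so no rearrangement issue arises.

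Finally I would invoke the definition of the Vorono\"i-Steen function from \S\ref{sec:specialfunctions}. By that definition $V(z;b_1,\dots,b_{md})$ is exactly the Meijer $G$-function $G^{md,0}_{0,\,md}$ evaluated at $z$ with no upper parameters and lower parameters $b_1,\dots,b_{md}$. Choosing $(b_1,\dots,b_{md})=(\boldsymbol{1}_{md-1},0)$ identifies the specialized $G$-function with $V\!\left(\,\cdot\,;\boldsymbol{1}_{md-1},0\right)$ and delivers the asserted identity. The argument is therefore purely a substitution, and I do not anticipate any genuine analytic obstacle; the only point requiring care is the bookkeeping of the empty tuple $\boldsymbol{1}_0$, namely confirming that after it is discarded the number of lower parameters is exactly $md$, matching the order of the Vorono\"i-Steen function so that the identification of parameters is exact rather than merely formal.
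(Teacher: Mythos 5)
Your proposal is correct and follows exactly the paper's own route: the paper likewise obtains Corollary \ref{Corollary 1.2} by substituting $r_1=d$, $r_2=0$ into Theorem \ref{Theorem 1} and then identifying the resulting $G^{md,0}_{0,\,md}$ with the Vorono\"i--Steen function via the relation \eqref{Voronoi function}. Your extra care about the empty tuple $\boldsymbol{1}_{mr_2}=\boldsymbol{1}_0$ and the parameter count is sound and merely makes explicit what the paper leaves implicit.
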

We next consider the special case where the number field is purely imaginary.
\begin{corollary}\label{Corollary 1.3}
Let $\K$ be any purely imaginary number field of degree $d$ with discriminant $d_\K$. Then
\begin{align}
I_\K^m(x) =\zeta_\K(0)^m + H_m x - \(\frac{D_{\K}}{(2\pi)^d}\)^{m/2}
\sum\limits_{n=1}^\infty \frac{v_\K^m(n)}{n} 
G_{0, \,\ md}^{\frac{md}{2}, 0}\(\begin{matrix}
- \\  \boldsymbol{1}_{\frac{md}{2}-1}, 0, \boldsymbol{1}_{\frac{md}{2}}
\end{matrix}
\bigg| \frac{(2\pi)^{dm}}{D_{\K}^m} nx\).
\end{align}
\end{corollary}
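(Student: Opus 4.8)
The plan is to derive Corollary 1.3 directly from Theorem 1 by specializing the signature. Since a purely imaginary number field has no real embeddings, its signature is $(r_1, r_2) = (0, d/2)$; in particular $d$ is even, so $md/2$ is a nonnegative integer and the Meijer $G$-function orders and parameter vectors appearing below remain well defined.

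I would then set $r_1 = 0$ in the identity of Theorem 1 and track each factor. Because $mr_1 = 0$, the outer finite sum $\sum_{j=0}^{mr_1}$ retains only the term $j = 0$, whose weight is $(-1)^0\binom{0}{0} = 1$. The scalar $i^{mr_1}$ reduces to $i^0 = 1$, and the phase inside the argument of the $G$-function, namely $(e^{i\pi/2})^{2j - mr_1}$, evaluates at $j = 0$ to $(e^{i\pi/2})^0 = 1$. Thus the entire superposition over $j$ collapses to a single non-oscillatory term.

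It remains to rewrite the surviving constants and indices under $r_1 = 0$, $r_2 = d/2$. One has $m(r_1 + r_2) = md/2$ and $mr_1 + mr_2 = md/2$, so the upper index of $G$ becomes $md/2$ and the lower parameter vector $\boldsymbol{1}_{mr_1 + mr_2 - 1}, 0, \boldsymbol{1}_{mr_2}$ becomes $\boldsymbol{1}_{md/2 - 1}, 0, \boldsymbol{1}_{md/2}$. The prefactor simplifies to $\frac{D_\K^{m/2}}{(2\pi)^{md/2}} = \left(\frac{D_\K}{(2\pi)^d}\right)^{m/2}$, while the argument becomes $\frac{(2\pi)^{dm}}{D_\K^m}\,nx$. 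Collecting these pieces reproduces the stated formula verbatim.

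As the corollary is a pure specialization, no analytic difficulty arises; the sole point requiring care is the bookkeeping — verifying that the $j$-sum genuinely collapses, that the phase factor trivializes, and that the reindexed $G$-function parameter lists still contain exactly $md$ lower entries so that the orders $p = 0$, $q = md$ match. Absolute convergence of the series is inherited unchanged from Theorem 1.
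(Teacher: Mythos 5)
Your proposal is correct and follows exactly the paper's route: the paper likewise obtains Corollary \ref{Corollary 1.3} by substituting $r_1 = 0$ and $r_2 = d/2$ into Theorem \ref{Theorem 1}, so that the $j$-sum collapses to the single $j=0$ term and the prefactor becomes $\left(D_\K/(2\pi)^d\right)^{m/2}$. Your version merely spells out the bookkeeping (phase trivializing, parameter counts summing to $md$) that the paper leaves implicit.
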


The paper is organised as follows. In \S 2, we discuss the main ingredients that are needed to prove our results. In \S 3, we provide the proof of Theorem \ref{Theorem 1} and the corollaries. In \S 4, we discuss special cases of Theorem \ref{Theorem 1}.

\section{Preliminaries}
Throughout the paper, we require some basic tools of analytic number theory and complex analysis. 

\subsection{Gamma function}
The Gamma function plays a significant role in this paper. For $\mathfrak{R}(z) > 0$, it
can be defined  via the convergent improper integral 
\begin{equation}
\Gamma(z) = \int_0^\infty e^{-t} t^{z-1} {\rm d}t.
\end{equation}
The analytic properties and functional equation of the $\Gamma$-function are given in the following proposition.
\begin{proposition}
The function $\Gamma(z)$ is absolutely convergent for $\mathfrak{R}(z) > 0$. It can be analytically continued to the whole complex plane except for simple poles at every non-positive integers. It also satisfies the functional equation:
\begin{equation}\label{Gamma functional equation}
\Gamma(z+1) = z\Gamma(z).
\end{equation}
\end{proposition}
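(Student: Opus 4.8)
The plan is to treat the three assertions in sequence, taking the functional equation as the engine that drives the analytic continuation. First I would establish absolute convergence on the right half-plane. Writing $z=\sigma+i\tau$ with $\sigma=\mathfrak{R}(z)>0$, one has $|e^{-t}t^{z-1}|=e^{-t}t^{\sigma-1}$, so I would split the defining integral at $t=1$: on $(0,1]$ the integrand is dominated by $t^{\sigma-1}$, which is integrable precisely because $\sigma>0$, while on $[1,\infty)$ the factor $e^{-t}$ forces rapid decay and hence integrability for every $\sigma$. This gives absolute convergence for $\mathfrak{R}(z)>0$. To upgrade convergence to holomorphy on the half-plane, I would fix a compact subset contained in a strip $0<\sigma_0\le\sigma\le\sigma_1$, produce on it a single integrable majorant independent of $z$, and then either differentiate under the integral sign or invoke Morera's theorem together with Fubini to conclude that $\Gamma$ is analytic there.

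Next I would prove the functional equation by integration by parts. Taking $u=t^{z}$ and ${\rm d}v=e^{-t}\,{\rm d}t$ in $\Gamma(z+1)=\int_0^\infty e^{-t}t^{z}\,{\rm d}t$ produces a boundary term $[-e^{-t}t^{z}]_0^\infty$ together with $z\int_0^\infty e^{-t}t^{z-1}\,{\rm d}t$. For $\mathfrak{R}(z)>0$ the boundary term vanishes at both endpoints, at infinity by the exponential decay and at $0$ because $t^{z}\to 0$, leaving exactly \eqref{Gamma functional equation}.

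Finally, I would use this relation to continue $\Gamma$ meromorphically. Iterating the functional equation $n$ times gives
\begin{equation*}
\Gamma(z)=\frac{\Gamma(z+n)}{z(z+1)\cdots(z+n-1)},
\end{equation*}
valid a priori for $\mathfrak{R}(z)>0$. Since the numerator is holomorphic on $\mathfrak{R}(z)>-n$ by the first step, the right-hand side defines a meromorphic function on that half-plane whose only possible singularities are simple poles at $z=0,-1,\ldots,-(n-1)$, arising from the zeros of the denominator. Letting $n\to\infty$ yields a continuation to all of $\C$, and a short residue computation at $z=-k$ confirms that each pole is genuinely simple.

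The individual steps are routine; the point that most warrants care is the continuation argument. Here one must verify that the formulas furnished on the successive half-planes $\{\mathfrak{R}(z)>-n\}$ agree on their overlaps, so that the continuation is single-valued, and that no zero of $z(z+1)\cdots(z+n-1)$ is cancelled by a zero of $\Gamma(z+n)$, so that the poles occur precisely, and only, at the non-positive integers.
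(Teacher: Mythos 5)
Your proposal is correct, and it is the standard argument: the paper itself offers no proof, simply deferring to \cite[Appendix A]{Ayoub}, where essentially this chain of reasoning (splitting the integral at $t=1$, integration by parts, and iterating $\Gamma(z)=\Gamma(z+n)/\bigl(z(z+1)\cdots(z+n-1)\bigr)$) appears. The two points you flag as needing care are both one-line checks: the formulas for successive $n$ agree on the overlap $\mathfrak{R}(z)>-n$ by the identity theorem, since they coincide on $\mathfrak{R}(z)>0$; and no cancellation occurs at $z=-k$ because $\Gamma(z+n)$ there equals $\Gamma(n-k)=(n-k-1)!\neq 0$, which also yields the residue $(-1)^k/k!$ and hence the simplicity of each pole.
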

\begin{proof}
This is well known, and a proof may be found, for example, in \cite[Appendix A]{Ayoub}.
\end{proof}
The $\Gamma$-function satisfies many important properties. Here we mention two of them.
\begin{itemize}
\item[(i)]
 Euler's reflection formula : 
 \begin{equation}\label{Reflection formula}
 \Gamma(z)\Gamma(1-z) = \frac{\pi}{\sin \pi z}
 \end{equation}
 where $z \notin \mathbb{Z}$.
 \item[(ii)]
  Legendre duplication formula :
  \begin{equation}\label{Duplication formula}
  \Gamma(z)\Gamma \left(z+\frac{1}{2}\right) = 2^{1-2z} \sqrt{\pi} \Gamma(2z).
  \end{equation}
\end{itemize}
The proof of these properties can be found in \cite[Appendix A]{Ayoub}.

\subsection{Dedekind zeta function}
Let $\K$ be any number field with extension degree $[\K : \Q] = d$ and signature $(r_1, r_2)$ (i. e., $d = r_1+2r_2$) and $D_\K$ denotes the absolute value of the discriminant of $\K$. Let $\mathcal{O}_\K$ be its ring of integers and $\mathfrak{N}$ be the norm map of $\K$ over $\Q$. Then the \begin{it}Dedekind zeta function\end{it} attached to number field $\K$ is defined by 
$$
\zeta_\K(s)=\sum_{\mathfrak{a}\subset\mathcal{O}_\K}\frac{1}{N(\mathfrak{a})^s}=\prod_{\mathfrak{p}\subset \mathcal{O}_\K}\bigg(1-\frac{1}{N(\mathfrak{p})^s}\bigg)^{-1},
$$
for all $s \in \C$ with $\mathfrak{R} (s)>1$, where $\mathfrak{a}$ and $\mathfrak{p}$ run over the non-zero integral ideals and prime ideals of $\mathcal{O}_\K$ respectively.
If $v_\K(m)$ denotes the number of non-zero integral ideals in $\mathcal{O}_\K$ with norm $m$, then $\zeta_\K$ can also be expressed as 
$$
\zeta_\K(s)=\sum_{m=1}^\infty \frac{v_\K(m)}{m^s}.
$$
Set
\begin{equation*}
\Lambda_\K(s) = D_\K^{s/2}\Gamma_\R(s)^{r_1} \Gamma_\C(s)^{r_2}\zeta_\K(s), 
\end{equation*}
where $\Gamma_\R(s)  = \pi^{-s/2}\Gamma(s/2)$ and $\Gamma_\C(s) = 2(2\pi)^{-s}\Gamma(s)$.
The following proposition provides the analytic behaviour and the functional equation satisfied by the Dedekind zeta function.
\begin{proposition}\label{Prop 2.2}
The function $\Lambda_\K(s)$ is absolutely convergent for $\mathfrak{R}(s) > 1$. It can be analytically continued to the whole complex plane except for a simple pole at $s = 1$. It also satisfies the functional equation
\begin{equation}\label{functional equation}
\Lambda_\K(s) = \Lambda_K(1-s).
\end{equation}
\end{proposition}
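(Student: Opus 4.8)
The plan is to establish all three assertions at once by Hecke's classical theta-function method, in which the analytic continuation, the location of the pole, and the functional equation all emerge from a single transformation law. Absolute convergence for $\mathfrak{R}(s) > 1$ is the easy part: the counting function $v_\K$ is multiplicative, and a termwise comparison of its Euler product with that of $\zeta(s)^d$ gives $v_\K(m) \ll_\varepsilon m^\varepsilon$, so $\sum_m v_\K(m) m^{-s}$ converges absolutely in that half-plane; since $\Gamma_\R(s)^{r_1}\Gamma_\C(s)^{r_2}$ and $D_\K^{s/2}$ are holomorphic and nonvanishing there, the same holds for $\Lambda_\K(s)$.

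For the continuation I would pass to an integral representation. Embedding $\K \hookrightarrow \R^{r_1}\times\C^{r_2}\cong\R^d$ through its archimedean places turns each nonzero integral ideal $\mathfrak{a}$ into a full lattice whose covolume is proportional to $\sqrt{D_\K}\,N(\mathfrak{a})$. The crucial elementary computation is the Mellin transform of a Gaussian: at a real place $\int_0^\infty e^{-\pi x^2 t}\,t^{s/2}\,\frac{\mathrm{d}t}{t}=\Gamma_\R(s)\,|x|^{-s}$, while at a complex place the analogous integral with $e^{-2\pi|z|^2 t}$ produces $\Gamma_\C(s)\,|z|^{-2s}$ up to the normalising constant built into $\Gamma_\C$. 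Assembling these across all places and summing over a complete set of ideals shows that $\Lambda_\K(s)$ equals, up to the elementary factor $D_\K^{s/2}$, the Mellin transform $\int_0^\infty\bigl(\Theta_\K(t)-1\bigr)\,t^{s}\,\frac{\mathrm{d}t}{t}$ (in a suitably normalised variable) of a theta function $\Theta_\K$ whose constant term has been removed.

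The functional equation and the pole then come from Poisson summation. Under $t\mapsto 1/t$ the lattice of $\mathfrak{a}$ is carried, up to scaling, to the lattice of the ideal dual with respect to the different, and the self-duality of this correspondence, combined with the reflection and duplication formulas \eqref{Reflection formula} and \eqref{Duplication formula} that encode the symmetry of the Gamma factors, yields a transformation of the shape $\Theta_\K(1/t)=t^{d/2}\Theta_\K^{\vee}(t)$. Splitting the Mellin integral at $t=1$ and folding the piece over $(0,1)$ with this rule produces an expression manifestly invariant under $s\mapsto 1-s$, which is \eqref{functional equation}; the boundary term generated by the removed constant contributes a simple pole at $s=1$ (with residue proportional to the class number and regulator through the volume of the relevant fundamental domain), and by the symmetry just obtained a matching simple pole at $s=0$, the function being holomorphic elsewhere.

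The step I expect to be the genuine obstacle is the treatment of units, not the Gaussian integrals, which are routine. By Dirichlet's unit theorem $\mathcal{O}_\K^{\times}$ has rank $r_1+r_2-1$, so within each ideal class the sum over principal ideals is really a sum over lattice points modulo the action of the units; making this precise requires the logarithmic embedding, the choice of a fundamental domain on the norm-one hypersurface, and a verification that Poisson summation descends correctly to this quotient. This is the technical heart of Hecke's argument. For the purposes of the present paper, where only the functional equation \eqref{functional equation} and the existence of the simple pole at $s=1$ are used downstream, it would be entirely legitimate to record Proposition~\ref{Prop 2.2} and cite a standard text on algebraic number theory for the full verification.
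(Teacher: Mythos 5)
Your outline is the standard Hecke theta-function argument, and that is exactly what the paper does here: it gives no proof of its own and simply cites Lang (pp.~254--255), where this argument is carried out in full, so your closing suggestion to record the statement with a reference is precisely the author's choice. Your sketch is correct in substance --- including the identification of the unit group and fundamental domain as the technical heart --- with only the minor caveats that the theta transformation follows from Poisson summation alone (the reflection and duplication formulas \eqref{Reflection formula} and \eqref{Duplication formula} are not needed for it), and that, as your own argument shows, the completed function $\Lambda_\K(s)$ in fact acquires simple poles at both $s=0$ and $s=1$, so the proposition's statement of a single pole at $s=1$ is slightly imprecise as written.
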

\begin{proof}
For example, one can find a proof of this statement in \cite[pp. 254-255]{Lang}.
\end{proof}
The following lemma gives the convexity bound of the Dedekind zeta function in the critical region.
\begin{lemma}\label{Convexity bound}
Let $\K$ be any number field of degree d with discriminant $d_\K$ (so $D_{\K}=|d_{\K}|$). Then
\begin{equation*}
\zeta_\K(\sigma+it) \ll \begin{cases}
|t|^{\frac{d}{2}-d\sigma+\epsilon} D_\K^{\frac{1}{2}-\sigma+\epsilon}, & \text{if} \, \ \sigma \leq 0, \\
|t|^{\frac{d(1-\sigma)}{2}+\epsilon} D_\K^{\frac{1-\sigma}{2}+\epsilon}, & \text{if} \, \ 0 \leq \sigma \leq 1,\\
|t|^\epsilon D_\K^\epsilon, & \text{if} \, \ \sigma \geq 1
\end{cases}
\end{equation*}
holds true for any $\epsilon>0$. 
\end{lemma}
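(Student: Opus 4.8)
The plan is to handle the three ranges separately by the classical recipe---the trivial bound to the right of the critical strip, the functional equation together with Stirling's formula to its left, and the Phragm\'en--Lindel\"of convexity principle in between---while tracking the discriminant explicitly so that every implied constant is uniform in $\K$ (depending only on $d$, $\epsilon$, and the real part $\sigma$). For $\sigma \ge 1$ I would begin from the observation that over each rational prime at most $d$ primes of $\mathcal{O}_\K$ lie above it, so that $\zeta_\K(s)$ is dominated coefficientwise by $\zeta(s)^d$; equivalently $v_\K(m) \le d_d(m)$, the $d$-fold divisor function. Hence for $\sigma \ge 1+\epsilon$ the Dirichlet series gives $\zeta_\K(\sigma+it) \ll_{d,\epsilon} \zeta(1+\epsilon)^d \ll 1$, with \emph{no} dependence on $D_\K$, which already yields the bound $|t|^\epsilon D_\K^\epsilon$ there. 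The thin strip $1\le \sigma\le 1+\epsilon$, where $\zeta_\K$ has its simple pole at $s=1$, is dealt with by passing to the entire function $(s-1)\zeta_\K(s)$ and invoking the maximum modulus principle, at the cost of an extra factor that is absorbed into $|t|^\epsilon$.

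For $\sigma \le 0$ I would solve the functional equation \eqref{functional equation} of Proposition~\ref{Prop 2.2} for $\zeta_\K$, obtaining
\[
\zeta_\K(s) = D_\K^{\frac12 - s}\,
\frac{\Gamma_\R(1-s)^{r_1}\Gamma_\C(1-s)^{r_2}}{\Gamma_\R(s)^{r_1}\Gamma_\C(s)^{r_2}}\,\zeta_\K(1-s).
\]
Since $\mathfrak{R}(1-s) = 1-\sigma \ge 1$, the last factor is controlled by the previous step and contributes $|t|^\epsilon D_\K^\epsilon$, while the explicit prefactor contributes $D_\K^{\frac12-\sigma}$. Applying Stirling's formula to the gamma quotient---the factors $e^{-\pi|t|/2}$ in numerator and denominator cancel---each of the $r_1$ real factors contributes $|t|^{\frac12-\sigma}$ and each of the $r_2$ complex factors contributes $|t|^{1-2\sigma}$; summing the exponents and using $r_1+2r_2 = d$ produces exactly $|t|^{\frac d2 - d\sigma}$. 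Assembling the pieces gives the first case.

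For the critical strip $0\le\sigma\le 1$ I would apply the Phragm\'en--Lindel\"of principle to the entire function $(s-1)\zeta_\K(s)$, which is holomorphic on the closed strip. On the line $\sigma=0$ the bound just established gives a majorant of the shape $(1+|t|)^{\frac d2+1+\epsilon}D_\K^{\frac12+\epsilon}$, and on the line $\sigma=1$ the first step gives $(1+|t|)^{1+\epsilon}D_\K^\epsilon$. The convexity principle interpolates these two majorants multiplicatively with weight $\theta=\sigma$, so that the exponent of $D_\K$ runs linearly from $\frac12$ to $0$ and the exponent of $|t|$ from $\frac d2+1$ to $1$; dividing out the auxiliary factor $(s-1)\asymp 1+|t|$ leaves precisely $|t|^{\frac{d(1-\sigma)}{2}+\epsilon}D_\K^{\frac{1-\sigma}{2}+\epsilon}$, the middle case. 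Continuity of the exponents at $\sigma=0$ and $\sigma=1$ confirms that the three ranges agree on their common boundaries.

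The one point demanding genuine care, and the main obstacle, is \emph{uniformity in the discriminant}. This is exactly why the right-edge estimate must be made through $v_\K(m)\le d_d(m)$ rather than through the size of $\zeta_\K$ near $s=1$ (whose residue depends on $D_\K$ via the class number formula), and why the discriminant must be carried as the explicit factor $D_\K^{s/2}$ through both the functional equation and the interpolation, so that its exponent comes out linearly as $\tfrac12-\sigma$ to the left and $\tfrac{1-\sigma}{2}$ inside the strip. Two further checks secure full uniformity: that $(s-1)\zeta_\K(s)$ is of finite order uniformly in $\K$, so that Phragm\'en--Lindel\"of applies with constants depending only on $d$ and $\epsilon$, and that the behaviour near the pole $s=1$ is controlled by $|\Res_{s=1}\zeta_\K(s)| \ll_\epsilon D_\K^\epsilon$, which follows from the standard estimate $h_\K R_\K \ll D_\K^{\frac12+\epsilon}$.
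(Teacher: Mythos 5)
Your proposal is correct and follows exactly the route the paper indicates: the paper's proof is a one-line citation to the standard argument via the Phragm\'en--Lindel\"of principle and the functional equation (referring to Iwaniec--Kowalski, Chapter 5), and your sketch simply carries out that argument in detail, with the exponent bookkeeping ($r_1(\tfrac12-\sigma)+r_2(1-2\sigma)=\tfrac d2-d\sigma$ and the linear interpolation of the $|t|$- and $D_\K$-exponents) all checking out. Your explicit attention to uniformity in the discriminant is a welcome addition beyond what the paper records.
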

\begin{proof}
This follows from a standard argument by applying the Phragmen-Lindel\"of principle and the functional equation \eqref{functional equation} of the Dedekind zeta function. The details may be found in \cite[Chapter 5]{Iwaniec}, for example.
\end{proof}

\subsection{Special function}\label{sec:specialfunctions}
The mathematical functions which have more or less established names and notations due to their importance in mathematical analysis, functional analysis, geometry, physics, or other applications are known as special functions. These mainly appear as solutions of differential equations or integrals of elementary functions. 

One of the most important families  of special functions are the Bessel functions,  which are basically the canonical solution of Bessel's differential equations
\begin{equation*}
x^2\frac{d^2y}{dx^2}+x\frac{dy}{dx}+(x^2-a^2)y = 0
\end{equation*}
where $a$ is any arbitrary complex number.

The G-function was introduced initially by Meijer as a very general function using a series. Later, it was defined more generally via a line integral in the complex plane (cf. \cite{Bateman}) given by 
\begin{equation}\label{G-function}
\begin{aligned}
G^{m, \ n}_{p, \ q}\bigg(\begin{matrix}
a_1, \ldots, a_p \\
b_1, \ldots, b_q
\end{matrix} \ \bigg|\ z\bigg)=\frac{1}{2\pi i}\underset{{(C)}}{\bigints} \frac{\prod\limits_{j=1}^m\Gamma(b_j-s)\prod\limits_{j=1}^n\Gamma(1-a_j+s)}{\prod\limits_{j=m+1}^q\Gamma(1-b_j+s)\prod\limits_{j=n+1}^p\Gamma(a_j-s)}z^s \rm{d}s ,
\end{aligned}
\end{equation}
where $z \neq 0$ and $m$, $n$, $p$, $q$ are integers which satisfy $0 \leq m \leq q$ and $0 \leq n \leq p$. The poles of the integrand must be all simple. Here $(C)$ in the integral denotes the vertical line from $C-i\infty$ to $C+i\infty$  such that all poles of $\Gamma(b_j-s)$ for $ j=1, \ldots, m$, must lie on one side of the vertical line while all poles of $\Gamma(1-a_j+s)$ for $ j=1, \ldots, n$ must lie on the other side. The integral then converges for $|\arg z| < \delta \pi$ where
$$\delta = m+n - \frac{1}{2}(p+q).$$
The integral additionally converges for $|\arg z|= \delta \pi$ if $(q-p)(\Re(s) + 1/2) > \Re (v) + 1$, where 
$$
v = \sum_{j=1}^{q}b_j - \sum_{j=1}^{p}a_j.
$$
Special cases of the $G$-function include many other special functions. For instance, there are many formulae which yield relations between the $G$-function and the Bessel functions (cf. \cite{Bateman}). Two important formulae among them are given by
\begin{equation}\label{G and J function}
G_{0,2}^{1,0}\( \begin{array}{cl}
- \\
a, b
\end{array} \bigg | z \) = z^{\frac{1}{2}(a+b)} J_{a-b} (2z^{1/2}),
\end{equation}
\begin{equation}\label{G and K function}
G_{0,2}^{2,0}\( \begin{array}{cl}
- \\
a, b
\end{array} \bigg | z \) = 2z^{\frac{1}{2}(a+b)} K_{a-b} (2z^{1/2}).
\end{equation}
The Vorono\"i-Steen function $V=V(x;a_1,\ldots,a_n)$
(cf. \cite{Steen}) is defined by
\begin{align*}
 \frac{1}{2\pi i}&\int_{0}^{\infty}x^sV(x;a_1,\ldots,a_n)\,\frac{{\rm d}x}{x} =\Gamma(s+a_1)\cdots \Gamma(s+a_n).
\end{align*}
It is a special case of the $G$-function:
\begin{equation}\label{Voronoi function}
V(x;a_1,\ldots,a_n)=G_{0,n}^{n,0}\!\left( \begin{array}{c} - \\ a_1,\ldots,a_n
 \end{array}  \bigg| \, \  x \right).
\end{equation}
 
\subsection{Riesz sum}\label{Riesz sum} 
Riesz sums (cf. \cite{CM}, \cite{Hardy}, \cite{KT}) were introduced by M. Riesz and have been studied in connection with summability of Fourier series and that of Dirichlet series. For a given increasing sequence $\{\lambda_n\}$ of positive real numbers and a given sequence $\{\alpha_n\}$ of complex numbers, the Riesz sum of order $\rho$ is defined by
\begin{equation}
\mathcal{A}^\rho(x)=\mathcal{A}_\lambda^\rho (x)= \sideset{}{'}\sum_{\lambda_n\leq x}(x-\lambda_n)^\rho\alpha_n,
\end{equation}
where $\rho$ is any non-negative integer and the prime appearing next to the summation sign means the corresponding term is to be halved for $\lambda_n=x$. 
It can also be expressed as 
\begin{equation}\label{Rieszsum}
\mathcal{A}_\lambda^\rho(x)= \rho\int_0^x(x-t)^{\rho-1}\mathcal{A}_\lambda (t){\rm d}t
\end{equation} 
for $\rho \geq 1$, where $\mathcal{A}_\lambda(x)=\mathcal{A}_\lambda^0(x)=\sideset{}{'}\sum_{\lambda_n\leq x}\alpha_n$ (cf. \cite{Hardy}, \cite{KT}). The  generalization of Perron's formula for the $\rho$-order Riesz sum is given by
\begin{equation}\label{Perron's formula}
\frac{1}{\Gamma(\rho+1)}\sideset{}{'}\sum\limits_{\lambda_n\leq x}(x-\lambda_n)^\rho \alpha_n=\frac{1}{2\pi i}\int_{C-i\infty}^{C+i\infty}\frac{\Gamma(w)\varphi(w)x^{\rho+w}}{\Gamma(w+\rho+1)}{\rm d}w,
\end{equation}
where $\varphi(w)=\sum\limits^\infty_{n=1}\frac{\alpha_n}{\lambda_n^w}$ and $C$ is bigger than the abscissa of absolute convergence of $\varphi(s)$ (cf. \cite{Hardy}, \cite{KT}). 
\begin{remark}\label{Remark}
Note that the integral in \eqref{Perron's formula} is an improper integral for the unbounded region. Hence it is not obvious that one can interchange the integral and the summation which is coming from the Dirichlet series $\varphi(s)$. Moreover, it is warned in \cite{Davenport} that applying the 0th order Perron's formula is problematic for this reason. It is usually safer to apply the truncated Perron's formula which can be found in many textbooks. The integral
\begin{align*}
\int_{(C)}\bigg | \frac{\Gamma(w)\varphi(w)x^{\rho+w}}{\Gamma(w+\rho+1)}\bigg |{\rm d}w &\ll x^{C+\rho} \int_{-\infty}^{\infty}\frac{1}{(C^2 +t^2)^{(\rho+1)/2}}\,{\rm d}t
\ll x^{C+\rho}\int_0^{\infty}\frac{1}{(C^2 +t^2)^{(\rho+1)/2}}\,{\rm d}t\\
&\ll x^{C+\rho}\[\int_0^1 \frac{1}{(C^2 +t^2)^{(\rho+1)/2}}\,{\rm d}t + \lim_{R \to \infty} \int_1^R \frac{1}{t^{\rho+1}} \,{\rm d}t \] 
 < \infty
\end{align*}
for $\rho \geq 1$. Hence it follows from Fubini's theorem that the integral and summation appearing in the $\rho$-order Riesz sum can be interchanged for $\rho \geq 1$.
\end{remark}
The lower-order Riesz sums can be obtained  from the higher-order Riesz sums by using the following lemma. 
\begin{lemma}\label{Lemma Riesz sum}
Let $\mathcal{A}_\lambda^\rho$ be the Riesz sum of order $\rho$ where $\rho$ is any non-negative integer. Then
\begin{equation}
\frac{d^i}{dx^i}\mathcal{A}_\lambda^\rho(x) =\rho(\rho-1)\cdots(\rho-i+1)\mathcal{A}_\lambda^{\rho-i}(x)
\end{equation}
holds true for every $0 \leq i \leq \rho$. In particular, we have
\begin{equation}
\frac{d^\rho}{dx^\rho}\(\frac{1}{\Gamma(\rho + 1)}\mathcal{A}_\lambda^\rho(x) \)= \mathcal{A}_\lambda(x).
\end{equation}
\end{lemma}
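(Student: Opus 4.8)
The plan is to reduce the whole statement to the single-step relation
\begin{equation*}
\frac{d}{dx}\mathcal{A}_\lambda^\rho(x)=\rho\,\mathcal{A}_\lambda^{\rho-1}(x)\qquad(\rho\geq1),
\end{equation*}
and then to iterate it. Granting this relation, the general formula follows by induction on $i$: the case $i=0$ is trivial (the coefficient is the empty product $1$), and if $\frac{d^i}{dx^i}\mathcal{A}_\lambda^\rho=\rho(\rho-1)\cdots(\rho-i+1)\,\mathcal{A}_\lambda^{\rho-i}$ for some $i\leq\rho-1$, then differentiating once more and applying the single-step relation to the order $\rho-i\geq1$ appends the next factor $\rho-i$, which is exactly the claim for $i+1$. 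Taking $i=\rho$ collapses the falling factorial to $\rho!=\Gamma(\rho+1)$ and leaves $\mathcal{A}_\lambda^0=\mathcal{A}_\lambda$, so that dividing by $\Gamma(\rho+1)$ gives the ``in particular'' assertion.

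To prove the single-step relation I would argue directly from the defining sum. Because $\lambda_n\to\infty$, on any bounded interval only finitely many terms of $\mathcal{A}_\lambda^\rho(x)=\sum_{\lambda_n\leq x}(x-\lambda_n)^\rho\alpha_n$ are nonzero, so on each open interval between consecutive $\lambda_n$ it is a finite sum of smooth functions and may be differentiated term by term, giving $\rho\sum_{\lambda_n\leq x}(x-\lambda_n)^{\rho-1}\alpha_n=\rho\,\mathcal{A}_\lambda^{\rho-1}(x)$. Equivalently, one can differentiate the integral representation \eqref{Rieszsum}, $\mathcal{A}_\lambda^\rho(x)=\rho\int_0^x(x-t)^{\rho-1}\mathcal{A}_\lambda(t)\,dt$, by Leibniz's rule: for $\rho\geq2$ the boundary term at $t=x$ carries the factor $(x-x)^{\rho-1}=0$ and the differentiated integrand reassembles $\rho\,\mathcal{A}_\lambda^{\rho-1}(x)$ through the same representation in order $\rho-1$, while for $\rho=1$ the representation reads $\mathcal{A}_\lambda^1(x)=\int_0^x\mathcal{A}_\lambda(t)\,dt$ and the fundamental theorem of calculus yields $\mathcal{A}_\lambda^0(x)$.

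The point I expect to require the most care is the behaviour at the jump points $x=\lambda_n$. There the zeroth-order sum $\mathcal{A}_\lambda^0$ is a genuine step function, so the fundamental theorem of calculus step and the interchange of $d/dx$ with the summation are legitimate only away from these points, or else in a suitable one-sided sense; this is precisely where the prime on the summation, halving the term at $\lambda_n=x$, is the bookkeeping device that keeps the chain of identities consistent, exactly as in the Perron-type formula \eqref{Perron's formula}. Since every application of the lemma in this paper uses these relations at generic $x$ (and, after integration, the continuous higher-order sums), the elementary interval-by-interval computation above suffices, and no delicate convergence argument beyond the finiteness already noted is needed.
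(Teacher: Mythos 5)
Your proposal is correct, and at its core it is the same argument as the paper's: everything reduces to the one-step identity $\frac{d}{dx}\mathcal{A}_\lambda^\rho(x)=\rho\,\mathcal{A}_\lambda^{\rho-1}(x)$, which is then iterated. Where you differ is in how that identity is established. The paper proceeds by induction on $\rho$: it integrates the representation \eqref{Rieszsum} by parts repeatedly to arrive at $\mathcal{A}_\lambda^{k+1}(x)=(k+1)\int_0^x\mathcal{A}_\lambda^k(t)\,{\rm d}t$, deduces the one-step identity for order $k+1$ from the fundamental theorem of calculus, and then invokes the inductive hypothesis to handle the remaining $i-1$ derivatives. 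You instead obtain the one-step identity for every $\rho\geq 1$ directly, either by differentiating the defining sum term by term (legitimate because only finitely many terms are nonzero on any bounded interval, and the term entering at $\lambda_n=x$ vanishes to order $\rho$ there) or by a single application of Leibniz's rule to \eqref{Rieszsum}, and then run a clean induction on $i$. Your route is slightly more elementary, avoids the somewhat awkward ``assume the statement for all $2\leq\rho\leq k$'' formulation, and makes explicit the one genuinely delicate point that the paper glosses over: the final differentiation lands on the step function $\mathcal{A}_\lambda^0$, so it holds only away from the jump points $x=\lambda_n$ (or in the symmetrized sense encoded by the halving convention). That observation is an improvement rather than a gap.
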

\begin{proof}
We will prove this lemma by induction on $\rho$. The statement holds trivially for the case $\rho = 0$ and the case of $\rho = 1$ follows from \eqref{Rieszsum}. Let us assume that the statement is true for all $2 \leq \rho \leq k$. We need to show that the statement is true for $\rho = k+1$.

It follows from \eqref{Rieszsum} and integration by parts that
\begin{align*}
\mathcal{A}_\lambda^{k+1}(x) &= (k+1)\int_0^x(x-t)^k \mathcal{A}_\lambda (t){\rm d}t\\
&= (k+1) \[ (x-t)^k \mathcal{A}_\lambda^1(t) \]_{t=0}^{t=x} + (k+1)k\int_0^x \mathcal{A}_\lambda^1(t) (x-t)^{k-1}{\rm d}t
\end{align*}
where the first term of the right-hand side becomes $0$, since $\mathcal{A}_\lambda^j(0) = 0$ for any non-negative $j$. We have from the inductive hypothesis that 
\begin{equation}\label{Derivative}
\frac{d}{dx}\mathcal{A}_\lambda^\rho(x) = \rho \mathcal{A}_\lambda^{\rho-1}(x)
\end{equation}
for every $2 \leq \rho \leq k$. Now proceeding similarly and applying \eqref{Derivative} repeatedly, we have
\begin{align}\label{Derivative2}
\mathcal{A}_\lambda^{k+1}(x) &= (k+1)k\int_0^x \mathcal{A}_\lambda^1(t) (x-t)^{k-1}{\rm d}t \nonumber\\
& = (k+1)k \frac{k-1}{2}\int_0^x \mathcal{A}_\lambda^2(t) (x-t)^{k-2}{\rm d}t \nonumber\\
& = (k+1)k \frac{k-1}{2}\frac{k-2}{3}\int_0^x \mathcal{A}_\lambda^3(t) (x-t)^{k-3}{\rm d}t \nonumber\\
& \hspace{.2cm}  \vdots \nonumber\\
& = (k+1) \int_0^x \mathcal{A}_\lambda^k(t){\rm d}t.
\end{align}
Now applying \eqref{Derivative2} and the inductive hypothesis respectively, we can finally conclude
\begin{align}
\frac{d^i}{dx^i}\mathcal{A}_\lambda^{k+1}(x) = \frac{d^{i-1}}{dx^{i-1}}\frac{d}{dx}\mathcal{A}_\lambda^{k+1}(x) =(k+1) \frac{d^{i-1}}{dx^{i-1}} \mathcal{A}_\lambda^k(x) = (k+1)k\cdots (k-i+2) \mathcal{A}_\lambda^{k+1-i}(x)
\end{align}
for every $0 \leq i \leq k+1$, which yields the claim.
\end{proof}

\section{Proof of results}
In this section we prove Theorem \ref{Theorem 1} and the corollaries of Theorem \ref{Theorem 1}.
\subsection{Proof of Theorem \ref{Theorem 1} :}
Consider the Riesz sum $I_\K^{m, \rho}(x)$ of order positive integer  $\rho$ with 
\begin{equation}\label{Assumption order}
\rho \geq \frac{md}{2}(1-\mu) + 1
\end{equation}
where $-1 < \mu < 0$ and normalized by the $\Gamma$-factor as via
\begin{equation}
I_\K^{m, \rho}(x) =\frac{1}{\Gamma(\rho+1)} \sideset{}{'}\sum_{n \leq x} (x-n)^\rho v_{\K}^m(n),
\end{equation}
where $ v_{\K}^m(n)$  counts the number of m-tuples of ideals $(\mathfrak{a}_1, \mathfrak{a}_2, \ldots, \mathfrak{a}_m)$ with $\mathfrak{N}_{\mathfrak{a}_1}\mathfrak{N}_{\mathfrak{a}_2}\cdots \mathfrak{N}_{\mathfrak{a}_m} = n$. The Dirichlet series associated to the arithmetic function $ v_{\K}^m(n)$ is 
$$
\sum_{n=1}^\infty \frac{v_{\K}^m(n)}{n^w} = \zeta_\K(w)^m,
$$
 which naturally arises from the product of $m$ Dedekind zeta functions. We apply the generalized Perron's formula \eqref{Perron's formula} on $I_\K^{m, \rho}(x)$ and obtain
\begin{equation}\label{Generalized Perron}
I_\K^{m, \rho}(x) = \frac{1}{2\pi i}\int_{C - i\infty}^{C + i\infty}f(w){\rm d}w,
\end{equation}
where $C > 1$ and 
$$
f(w) = \frac{\Gamma(w)}{\Gamma(w+\rho+1)}\zeta_\K(w)^m x^{\rho+w}= \frac{1}{w(w+1)\cdots (w+\rho)}\zeta_\K(w)^m x^{\rho+w}.
$$
We consider the contour $\mathcal{C}$ given by the rectangle with vertices $\{C - iT, C + iT, \mu +iT, \mu - iT \}$ in the anticlockwise direction as $T \to \infty$. The integrand $f(w)$ is analytic inside the contour $\mathcal{C}$ except for simple poles at $w=0$ and $w = 1$. The residues of $f(w)$ at $w = 0$ and $w=1$ are
\begin{equation*}
\underset{w=0} \Res \, \ f(w) = \zeta_\K(0)^m\frac{x^{\rho}}{\rho !}
\end{equation*}
and
\begin{equation*}
\underset{w=1} \Res \, \ f(w) = \(\underset{w=1} \Res \, \ \zeta_\K(w)^m \)\frac{x^{\rho+1}}{(\rho+1) !} = H_m \frac{x^{\rho+1}}{(\rho+1) !}
\end{equation*}
respectively, where $H_m = \underset{w=1} \Res \, \ \zeta_\K(w)^m$. Hence by Cauchy's residue formula we have
\begin{equation}\label{Residue theorem}
\frac{1}{2\pi i}\int_{\mathcal{C}} f(w){\rm d}w =\underset{w=0} \Res \, \ f(w) + \underset{w=1}\Res \, \ f(w) =\zeta_\K(0)^m\frac{x^{\rho}}{\rho !} + H_m \frac{x^{\rho+1}}{(\rho+1) !}.
\end{equation}
Combining \eqref{Generalized Perron} and \eqref{Residue theorem}, it follows that
\begin{equation}\label{Horizontal and vertical}
I_\K^{m, \rho}(x) =\zeta_\K(0)^m\frac{x^{\rho}}{\rho !} + H_m \frac{x^{\rho+1}}{(\rho+1) !} + \mathcal{H}_1 + \mathcal{H}_2 + \mathcal{V}
\end{equation}
where $\mathcal{H}_1 = \underset{T \to \infty} \lim \frac{1}{2\pi i}\int_{\mu + iT}^{C+iT} f(w) {\rm d}w$ and $\mathcal{H}_2 = \underset{T \to \infty} \lim \frac{1}{2\pi i}\int_{C - iT}^{\mu - iT} f(w) {\rm d}w$ are the horizontal integrals and $\mathcal{V} = \frac{1}{2\pi i}\int_{\mu - i\infty}^{\mu + i\infty} f(w){\rm d}w$ is the vertical integral. Firstly, we want to estimate the horizontal integrals. We replace $w$ by $\sigma + iT$ and apply Lemma \ref{Convexity bound} respectively to obtain
\begin{align}
\bigg| \frac{1}{2\pi i}\int_{\mu + iT}^{C+iT} f(w) {\rm d}w\bigg| &\leq  \frac{1}{2\pi }\int_{\mu}^{C} |\zeta_\K (\sigma + it)|^m \frac{x^\sigma}{T^{\rho + 1}} {\rm d}\sigma \nonumber\\
& \ll \int_{\mu}^{C} (T^{md}D_\K^m)^{\frac{1-\sigma}{2}+\epsilon} \frac{x^\sigma}{T^{\rho +1}} {\rm d}\sigma \nonumber\\
& \ll T^{\frac{md}{2} + \epsilon - (\rho + 1)} D_\K^{\frac{m}{2} + \epsilon} \underset{\mu \leq \sigma \leq C} \max x^\sigma (T^{md}D_\K^m)^{- \frac{\sigma}{2}}\nonumber\\
& \leq T^{\frac{md}{2} + \epsilon - (\rho + 1)} D_\K^{\frac{m}{2} + \epsilon} \{x^C (T^{md}D_\K^m)^{- \frac{C}{2}} +     x^\mu (T^{md}D_\K^m)^{- \frac{\mu}{2}} \}\nonumber\\
&\leq T^{\frac{md}{2}(1 - \mu) + \epsilon - (\rho + 1)} D_\K^{\frac{m}{2}(1 - \mu) + \epsilon} \{x^C + x^\mu \}.
\end{align}
Now, from the assumption \eqref{Assumption order} on $\rho$, it satisfies that
\begin{equation}
\bigg| \frac{1}{2\pi i}\int_{\mu + iT}^{C+iT} f(w) {\rm d}w \bigg| \leq \frac{D_\K^{\frac{m}{2}(1 - \mu) + \epsilon}}{T^{2-\epsilon}}(x^C + x^\mu).
\end{equation}
Therefore, we can conclude that $\mathcal{H}_1$ vanishes as $T \to \infty$. Similarly, one can show that $\mathcal{H}_2$ vanishes as $T \to \infty$. 
We now shift our attention to the vertical integral $\mathcal{V}$. It follows from \eqref{functional equation} that
\begin{equation}\label{Functional equation 2 }
\zeta_\K(w) = D_\K^{1/2 - w}\(\frac{\pi^{w-1/2} \Gamma(\frac{1-w}{2})}{\Gamma(\frac{w}{2})} \)^{r_1} \(\frac{(2\pi)^{2w-1} \Gamma(1-w)}{\Gamma(w)} \)^{r_2} \zeta_\K (1-w).
\end{equation}
We apply \eqref{Reflection formula} and \eqref{Duplication formula} to obtain
\begin{align}\label{Gamma properties}
\frac{\Gamma(\frac{1-w}{2})^{r_1}}{\Gamma(\frac{w}{2})^{r_1}} 
= \frac{[\Gamma(\frac{1-w}{2})\Gamma(1 - \frac{w}{2})]^{r_1}}{[\Gamma(\frac{w}{2})\Gamma(1 - \frac{w}{2})]^{r_1}}
=  \frac{[\Gamma(\frac{1-w}{2})\Gamma(\frac{1}{2} + \frac{1 - w}{2})]^{r_1}}{\(\frac{\pi}{\sin \frac{\pi}{2}w} \)^{r_1}}
= \( \frac{2^w}{\sqrt{\pi}} \)^{r_1} \(\sin \frac{\pi}{2}w \)^{r_1} \Gamma(1-w)^{r_1}.
\end{align}
Inserting \eqref{Gamma properties} into \eqref{Functional equation 2 }, we get
\begin{align*}
\zeta_\K(w) &= D_{\K}^{1/2 - w}i^{r_1}(2\pi)^{dw - r_1 - r_2} \(e^{-\frac{i\pi w}{2}} - e^{\frac{i\pi w}{2}} \)^{r_1} \frac{\Gamma(1-w)^{r_1+r_2}}{\Gamma(w)^{r_2}}\zeta_\K(1-w) \\
&= D_\K^{1/2 - w}i^{r_1}(2\pi)^{dw - r_1 - r_2} \sum_{j=0}^{r_1} (-1)^j 
\( \begin{matrix}
r_1 \\ j
\end{matrix} \) 
\( e^{-\frac{i\pi w}{2}} \)^{r_1 - j} \( e^{\frac{i\pi w}{2}} \)^j \frac{\Gamma(1-w)^{r_1+r_2}}{\Gamma(w)^{r_2}}\zeta_\K(1-w).
\end{align*}
Therefore, taking the $m$-th power, we have 
\begin{equation}\label{Equation 3.10}
\zeta_\K (w)^m = \frac{i^{mr_1}D_\K^{m/2}}{(2\pi)^{mr_1 + mr_2}} \sum_{j=0}^{mr_1} (-1)^j \( \begin{matrix}
mr_1 \\ j
\end{matrix} \) 
\[ (e^{\frac{i\pi}{2}})^{2j-mr_1}\]^w \( \frac{(2\pi)^{dm}}{D_{\K}^m} \)^w \frac{\Gamma(1-w)^{mr_1+mr_2}}{\Gamma(w)^{mr_2}}\zeta_\K(1-w)^m.
\end{equation}
We now insert \eqref{Equation 3.10} into the integrand of the vertical integral and obtain
\begin{align}\label{Equation 3.11}
\mathcal{V} = \frac{i^{mr_1}D_\K^{m/2}}{(2\pi)^{mr_1 + mr_2}} \sum_{j=0}^{mr_1} (-1)^j \( \begin{matrix}
mr_1 \\ j
\end{matrix} \) 
\frac{1}{2\pi i} \int_{\mu - i\infty}^{\mu + i\infty}
&\[ \(e^{\frac{i\pi}{2}}\)^{2j-mr_1} \frac{(2\pi)^{dm}}{D_{\K}^m} \]^w \frac{\Gamma(1-w)^{mr_1+mr_2}}{\Gamma(w)^{mr_2}} \nonumber\\
&\times \zeta_\K(1-w)^m \frac{1}{w(w+1)\cdots (w+\rho)} x^{\rho+w} {\rm d}w \nonumber\\
= \frac{i^{mr_1}D_{\K}^{m/2}}{(2\pi)^{mr_1 + mr_2}} \sum_{j=0}^{mr_1} (-1)^j \( \begin{matrix}
mr_1 \\ j
\end{matrix} \) 
\frac{1}{2\pi i} \int_{\mu - i\infty}^{\mu + i\infty}
&\[ \(e^{\frac{i\pi}{2}}\)^{2j-mr_1} \frac{(2\pi)^{dm}}{D_{\K}^m} \]^w \frac{\Gamma(1-w)^{mr_1+mr_2}}{\Gamma(w)^{mr_2}} \nonumber\\
&\times \sum_{n=1}^\infty \frac{v_{\K}^m(n)}{n^{1-w}} \frac{1}{w(w+1)\cdots (w+\rho)} x^{\rho+w} {\rm d}w. 
\end{align} 
It follows from the remark in \S \ref{Riesz sum} that we can interchange the integral and the summation in \eqref{Equation 3.11} under our assumption $\rho > 1$.  Hence we have 
\begin{align*}
\mathcal{V} = \frac{i^{mr_1}D_{\K}^{m/2}}{(2\pi)^{mr_1 + mr_2}} \sum_{j=0}^{mr_1} (-1)^j \( \begin{matrix}
mr_1 \\ j
\end{matrix} \) 
&\sum_{n=1}^\infty \frac{v_{\K}^m(n)}{n}
\frac{1}{2\pi i} \int_{\mu - i\infty}^{\mu + i\infty}
\[ \(e^{\frac{i\pi}{2}}\)^{2j-mr_1} \frac{(2\pi)^{dm}}{D_{\K}^m}n \]^w  \\
&\times \frac{\Gamma(1-w)^{mr_1+mr_2}}{\Gamma(w)^{mr_2}}  \frac{1}{w(w+1)\cdots (w+\rho)} x^{\rho+w} {\rm d}w. 
\end{align*}
We now differentiate the vertical integral  $\rho$-times with respect to $x$ to obtain
\begin{align}
\frac{d^\rho}{dx^\rho}\mathcal{V} = \frac{i^{mr_1}D_{\K}^{m/2}}{(2\pi)^{mr_1 + mr_2}} \sum_{j=0}^{mr_1} (-1)^j \( \begin{matrix}
mr_1 \\ j
\end{matrix} \) 
\sum_{n=1}^\infty \frac{v_{\K}^m(n)}{n}
\frac{1}{2\pi i} &\int_{\mu - i\infty}^{\mu + i\infty}
\[ \(e^{\frac{i\pi}{2}}\)^{2j-mr_1} \frac{(2\pi)^{dm}}{D_{\K}^m}n \]^w  \nonumber\\
&\times \frac{\Gamma(1-w)^{mr_1+mr_2}}{\Gamma(w)^{mr_2}} \, \ \frac{x^w}{w} \, \ {\rm d}w.
\end{align}
It follows from the functional equation \eqref{Gamma functional equation} for the
$\Gamma$-function that $\Gamma(1-w) = -w\Gamma (-w)$. Hence we have
\begin{align}\label{Equation 3.13}
\frac{d^\rho}{dx^\rho}\mathcal{V} = - \frac{i^{mr_1}D_{\K}^{m/2}}{(2\pi)^{mr_1 + mr_2}} \sum_{j=0}^{mr_1} (-1)^j \( \begin{matrix}
mr_1 \\ j
\end{matrix} \) 
\sum_{n=1}^\infty \frac{v_{\K}^m(n)}{n}
\frac{1}{2\pi i} &\int_{\mu - i\infty}^{\mu + i\infty}
\[ \(e^{\frac{i\pi}{2}}\)^{2j-mr_1} \frac{(2\pi)^{dm}}{D_{\K}^m}nx \]^w \nonumber \\
&\times \frac{\Gamma(1-w)^{mr_1+mr_2-1}\Gamma(-w)}{\Gamma(w)^{mr_2}} \, \ {\rm d}w.
\end{align}
As before, we let $\boldsymbol{1}_\ell$ denote the $\ell$-tuple all of whose entries equal $1$. We plug the definition of the Meijer G-function \eqref{G-function} into \eqref{Equation 3.13} and  obtain
\begin{align}\label{Equation 3.14}
\frac{d^\rho}{dx^\rho}\mathcal{V} = - \frac{i^{mr_1}D_{\K}^{m/2}}{(2\pi)^{mr_1 + mr_2}} &\sum_{j=0}^{mr_1} (-1)^j \( \begin{matrix}
mr_1 \\ j
\end{matrix} \) 
\sum_{n=1}^\infty \frac{v_{\K}^m(n)}{n}\nonumber\\
& \times G_{0, \,\ md}^{m(r_1+r_2), 0}\(\begin{matrix}
- \\  \boldsymbol{1}_{mr_1+mr_2-1}, 0, \boldsymbol{1}_{mr_2}
\end{matrix}
\bigg| \frac{(2\pi)^{dm}}{D_{\K}^m} (e^{\frac{i\pi}{2}})^{2j-mr_1}nx\).
\end{align}
Differentiating both sides of \eqref{Horizontal and vertical} $\rho$-times with respect to $x$, we obtain
\begin{equation}
\frac{d^\rho}{dx^\rho}I_\K^{m, \rho}(x) = \zeta_\K(0)^m + H_m x + \frac{d^\rho}{dx^\rho}\mathcal{V}.
\end{equation}
Finally, we obtain the result  
\begin{align}
I_\K^m(x) = \zeta_\K(0)^m + H_m x &- \frac{i^{mr_1}D_{\K}^{m/2}}{(2\pi)^{mr_1+mr_2}}\sum\limits_{j=0}^{mr_1}(-1)^j                \(\begin{matrix}
mr_1\\ j
\end{matrix}\)
\sum\limits_{n=1}^\infty \frac{v_k^m(n)}{n} \nonumber\\ 
&\times G_{0, \,\ md}^{m(r_1+r_2), 0}\(\begin{matrix}
- \\  \boldsymbol{1}_{mr_1+mr_2-1}, 0, \boldsymbol{1}_{mr_2}
\end{matrix}
\bigg| \frac{(2\pi)^{dm}}{D_{\K}^m} (e^{\frac{i\pi}{2}})^{2j-mr_1}nx\),
\end{align}
using Lemma \ref{Lemma Riesz sum} and \eqref{Equation 3.14}. This completes the proof of Theorem \ref{Theorem 1}.

\subsection{Proof of corollaries :}
Corollary \ref{Corollary 1.2} follows from Theorem \ref{Theorem 1} by considering $r_1 = d$ and $r_2 = 0$ in Theorem \ref{Theorem 1} where  $(r_1, r_2)$ is the signature of the number field $\K$. Here the error terms can be obtained from the relation \eqref{Voronoi function}. Corollary \ref{Corollary 1.3} also follows from Theorem \ref{Theorem 1} by considering $r_1 = 0$ and $r_2 = d/2$ in Theorem \ref{Theorem 1}.

\section{Applications}
In this section, we investigate some special cases of Theorem \ref{Theorem 1}.
\subsection{Piltz divisor problem in $\mathbb{Q}$}
We consider first the problem of estimating the partial sum 
$$
I_{\Q}^m(x) = \sideset{}{'} \sum_{n\leq x} d_m(n)
$$
where $\sum'$ means that the term corresponding to $n=x$ is halved and $d_m(n)$ counts the number of ways that $n$ can be written as a product of $m$ numbers. This problem can be considered a special case of Corollary \ref{Corollary 1.2} by taking
$d=1$ in Corollary \ref{Corollary 1.2}. We can conclude the following.
\begin{theorem}\label{Theorem 2}
For every $x> 0$, we have
\begin{align}
\sideset{}{'}\sum_{n\leq x} d_m(n) =xP_{m-1}(\log x)  + \(-\frac{1}{2}\)^m &- \frac{i^{m}}{(2\pi)^{m}}\sum\limits_{j=0}^{m}(-1)^j \(\begin{matrix}
m\\ j
\end{matrix}\)
\sum\limits_{n=1}^\infty \frac{d_m(n)}{n}  \nonumber\\
 &\times V\( (e^{\frac{i\pi}{2}})^{2j-m} (2\pi)^{m} nx ; \boldsymbol{1}_{m-1}, 0 \),
\end{align}
where $P_{m-1}(t)$ is a polynomial of degree $m-1$ in $t$ such that the coefficients can be evaluated from the relation
$$P_{m-1}(\log x) = \underset{s=1}\Res \, \ \zeta^m(s)\frac{x^{s-1}}{s}.$$
\end{theorem}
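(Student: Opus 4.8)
The plan is to run the contour-integration argument proving Theorem \ref{Theorem 1} with $\K = \Q$, whose signature is $(r_1, r_2) = (1, 0)$, so that $d = 1$ and $D_\Q = 1$. First I would record the dictionary between the two settings: since $\zeta_\Q(s) = \zeta(s)$ we have $v_\Q^m(n) = d_m(n)$, the Dirichlet coefficients of $\zeta(s)^m$, and $\zeta_\Q(0)^m = \zeta(0)^m = (-1/2)^m$. Substituting $d = 1$, $r_1 = 1$, $r_2 = 0$ and $D_\Q = 1$ into the error term of Corollary \ref{Corollary 1.2} collapses the prefactor $i^{md}D_\K^{m/2}(2\pi)^{-md}$ to $i^m(2\pi)^{-m}$, the binomial sum to $\sum_{j=0}^m(-1)^j\binom{m}{j}$, the Vorono\"i--Steen function (via \eqref{Voronoi function}) to $V(\,\cdot\,;\boldsymbol{1}_{m-1},0)$, and its argument to $(e^{i\pi/2})^{2j-m}(2\pi)^m nx$. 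These are precisely the factors in the claimed error term, so the oscillatory part demands no work beyond this specialization.

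The one genuinely new point, and the only place I expect real difficulty, is the main term. In the proof of Theorem \ref{Theorem 1} the pole of the integrand
\begin{equation*}
f(w) = \frac{\zeta(w)^m x^{\rho+w}}{w(w+1)\cdots(w+\rho)}
\end{equation*}
at $w = 1$ was treated as simple, producing $H_m x$ after $\rho$ differentiations. For $\K = \Q$ and $m \ge 2$, however, $\zeta(w)^m$ has a pole of order $m$ at $w = 1$, so I would instead extract $\Res_{w=1} f(w)$ from the full order-$m$ Laurent expansion of $\zeta(w)^m$ about $w = 1$. The clean way to package this is to differentiate $\rho$ times under the residue first: using $\frac{d^\rho}{dx^\rho}x^{\rho+w} = \Gamma(\rho+w+1)/\Gamma(w+1)\,x^w$ together with $w(w+1)\cdots(w+\rho) = \Gamma(w+\rho+1)/\Gamma(w)$, the quotient simplifies to $x^w/w$, so the contribution of the pole at $w=1$ to $\frac{d^\rho}{dx^\rho}I_\Q^{m,\rho}(x)$ equals $\Res_{w=1}\big[\zeta(w)^m x^w/w\big]$, which is $x P_{m-1}(\log x)$ by the defining relation in the statement. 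The simple pole at $w = 0$ is untouched and contributes $\zeta(0)^m x^\rho/\rho!$, i.e. $(-1/2)^m$ after differentiation, giving the constant term.

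For everything else I would invoke the proof of Theorem \ref{Theorem 1} verbatim. The horizontal segments vanish as $T \to \infty$ by the convexity bound of Lemma \ref{Convexity bound} applied to $\zeta = \zeta_\Q$, together with the hypothesis $\rho \ge \frac{m}{2}(1-\mu) + 1$ on the Riesz order; the interchange of summation and integration on the line $\re(w) = \mu$ is licensed by the Remark in \S\ref{Riesz sum} since $\rho > 1$; and the functional-equation manipulation \eqref{Equation 3.10}--\eqref{Equation 3.14} carries over unchanged with $r_2 = 0$ and $D_\Q = 1$, the quotient $\Gamma(1-w)^{mr_1+mr_2}/\Gamma(w)^{mr_2}$ reducing to $\Gamma(1-w)^m$. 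Finally, Lemma \ref{Lemma Riesz sum} passes from the $\rho$-th order Riesz sum back to $I_\Q^m(x) = \sideset{}{'}\sum_{n \le x} d_m(n)$ and assembles the three contributions into the stated identity. The only vigilance required throughout is to keep the order-$m$ pole at $w = 1$ intact rather than collapsing it to a simple pole; this is exactly what turns the linear main term of Theorem \ref{Theorem 1} into the polynomial main term $x P_{m-1}(\log x)$.
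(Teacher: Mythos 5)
Your proposal is correct and follows essentially the same route as the paper, which obtains Theorem \ref{Theorem 2} by specializing the machinery of Theorem \ref{Theorem 1} (via Corollary \ref{Corollary 1.2} with $d=1$) and remarking that the main term is the sum of the residues of $\zeta^m(s)x^s s^{-1}$ at $s=0$ and $s=1$. If anything you are more careful than the paper: you explicitly identify that the pole of $\zeta(w)^m$ at $w=1$ has order $m$, so that blind substitution into Corollary \ref{Corollary 1.2} would give the wrong main term $H_m x$, and your reduction of the residue contribution to $\Res_{w=1}\bigl[\zeta(w)^m x^w/w\bigr] = xP_{m-1}(\log x)$ after the $\rho$-fold differentiation is exactly the justification the paper leaves implicit.
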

Here the main term of the partial sum $I_\Q^m(x)$ can be obtained from the sum of the 
residues of the function $\zeta^m(s)x^s s^{-1}$ at $s=0$ and $s=1$. We obtain Vorono\"i's theorem from Theorem \ref{Theorem 2} by considering $m=2$. Let $d_2(n) = d(n)$ be the divisor function. Then we can conclude the following as a corollary.
\begin{corollary}[Vorono\"i's Theorem]\label{Voronoi proof}
For every $x > 0$, we have
\begin{equation*}
\sideset{}{'}\sum_{n\le x}\!\!d(n) =x\log x+(2\gamma-1)x+\frac{1}{4} - \sum_{n=1}^\infty\frac{d(n)}{n}
\left(Y_1\left(4\pi\,\sqrt{xn}\,\right)+\frac{2}{\pi}K_1\left(4\pi\,\sqrt{xn}\,\right)\right)\sqrt{xn},
\end{equation*}
where $\gamma$ is Euler-Mascheroni constant. 
\end{corollary}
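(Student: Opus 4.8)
The plan is to specialize Theorem~\ref{Theorem 2} to $m=2$, where $d_2(n)=d(n)$, and then to convert the resulting Vorono\"i--Steen functions into the classical Bessel functions $K_1$ and $Y_1$.

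First I would settle the main term. Putting $m=2$ makes the constant $(-1/2)^m$ equal to $1/4$ (this is just $\zeta(0)^2$), and turns the polynomial term into $x\,\Res_{s=1}\zeta^2(s)x^{s-1}/s$. Using the Laurent expansion $\zeta(s)=(s-1)^{-1}+\gamma+O(s-1)$, so that $\zeta^2(s)=(s-1)^{-2}+2\gamma(s-1)^{-1}+\cdots$, and expanding $x^{s-1}/s=1+(s-1)(\log x-1)+\cdots$ about $s=1$, the coefficient of $(s-1)^{-1}$ is $\log x+2\gamma-1$. Hence $xP_1(\log x)=x\log x+(2\gamma-1)x$, and the full main term is $x\log x+(2\gamma-1)x+\tfrac14$, as claimed.

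Next I would unwind the finite $j$-sum. For $m=2$ the summand carries the phase $(e^{i\pi/2})^{2j-2}$, equal to $e^{-i\pi},1,e^{i\pi}$ for $j=0,1,2$, with weights $\binom{2}{j}(-1)^j=1,-2,1$; the overall prefactor is $-i^2/(2\pi)^2=1/(4\pi^2)$. The basic tool is the reduction $V(z;1,0)=2z^{1/2}K_1(2z^{1/2})$, which is the case $(a,b)=(1,0)$ of \eqref{G and K function} combined with \eqref{Voronoi function}. The middle term $j=1$ has positive real argument $4\pi^2 nx$, so $V(4\pi^2 nx;1,0)=4\pi\sqrt{nx}\,K_1(4\pi\sqrt{nx})$; after the weight $-2$, the prefactor $1/(4\pi^2)$, and the factor $d(n)/n$, this contributes exactly the $\tfrac2\pi K_1$ piece of the identity.

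The essential step is the two outer terms $j=0,2$, whose arguments $4\pi^2 nx\,e^{\mp i\pi}$ lie on opposite branches, giving $z^{1/2}=\mp 2\pi i\sqrt{nx}$ and hence $K_1$ evaluated at the purely imaginary points $\mp 4\pi i\sqrt{nx}$. Here I would invoke the connection formula $K_1(iy)=-\tfrac{\pi}{2}H_1^{(2)}(y)$ for $y>0$ (which follows from $K_\nu(z)=\tfrac{\pi i}{2}e^{\nu\pi i/2}H_\nu^{(1)}(ze^{i\pi/2})$ and $H_\nu^{(1)}(ze^{i\pi})=-e^{-\nu\pi i}H_\nu^{(2)}(z)$), together with $H_1^{(2)}=J_1-iY_1$ and $K_1(-iy)=\overline{K_1(iy)}$. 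The hard part will be the branch bookkeeping: each of $j=0$ and $j=2$ separately produces a spurious imaginary multiple of $\sqrt{nx}\,J_1(4\pi\sqrt{nx})$, but because the two branches are complex conjugate these $J_1$ contributions are equal and opposite and cancel in the sum over $j=0,2$, leaving the real quantity $-4\pi^2\sqrt{nx}\,Y_1(4\pi\sqrt{nx})$. Multiplying by $1/(4\pi^2)$ and the factor $d(n)/n$ yields the $Y_1$ term, and combining the three summands with the main term produces the stated Vorono\"i identity; the interchange of summation and integration needed along the way is already justified by the remark following \eqref{Perron's formula}.
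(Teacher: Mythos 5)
Your proposal is correct and follows essentially the same route as the paper: specialize Theorem~\ref{Theorem 2} to $m=2$, reduce $V(\cdot;1,0)$ to $K_1$ via \eqref{G and K function}, and then convert $K_1$ at the imaginary arguments $\mp 4\pi i\sqrt{nx}$ into $J_1$ and $Y_1$, with the $J_1$ contributions from $j=0$ and $j=2$ cancelling. The only cosmetic difference is that you phrase the last step through $H_1^{(2)}$ and conjugation symmetry, while the paper derives the equivalent identity $K_1(iz)=-\tfrac{\pi}{2}[J_1(-z)+iY_1(-z)]$ from \eqref{Y I and K functions}--\eqref{J and I functions} and the negative-argument formulas.
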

\begin{proof}
The main term of the partial sum $I_\Q^2(x)$ is basically the sum of the residues of $\zeta^2(s)x^s s^{-1}$ at $s=0$ and $s=1$. Let $\Delta_\Q^2(x)$ denote the error term of $I_\Q^2(x)$. It follows from Theorem \ref{Theorem 2} that 
\begin{align*}
\Delta_\Q^2(x) = \sum_{n=1}^\infty \frac{d(n)}{n}\frac{1}{4\pi^2}\[ V(e^{-i\pi} 4\pi^2nx ; 1,0) -2V(4\pi^2nx ; 1,0) + V(e^{i\pi} 4\pi^2nx ; 1,0) \].
\end{align*}
We apply \eqref{G and K function} here and obtain
\begin{align}\label{Error term with K}
\Delta_\Q^2(x) =\sum_{n=1}^\infty \frac{d(n)}{n} \frac{\sqrt{nx}}{\pi}\[ -i K_1(- 4\pi i \sqrt{nx}) -2K_1(4\pi \sqrt{nx}) + i K_1(4\pi i \sqrt{nx}) \].
\end{align}
The Bessel functions are inter-connected via the following relations (cf. \cite{BatemanE}).
\begin{equation}\label{Y I and K functions}
Y_\nu(iz) = e^{\frac{\pi i (\nu +1)}{2}}I_\nu(z) - \frac{2}{\pi} e^{-\frac{\pi i \nu}{2}}K_\nu(z),
\end{equation}
where $-\pi < \arg z \leq \frac{\pi}{2}$ and
\begin{equation}\label{J and I functions}
J_\nu(iz) = e^{\frac{\pi i \nu}{2}} I_\nu(z).
\end{equation}
It follows from \eqref{Y I and K functions} and \eqref{J and I functions} that for $\nu = 1$, we have
\begin{equation}\label{K Y and J result}
K_1(iz) = -\frac{\pi}{2} [J_1(-z) + i Y_1(-z)].
\end{equation}
We also need the following two basic formulas to evaluate Bessel functions of integer order at negative arguments:
\begin{equation}\label{J at negative arg}
J_n(-z) = (-1)^n J_n(z)
\end{equation}
and
\begin{equation}\label{Y at negative arg}
Y_n(−z) = (-1)^n Y_n(z) + 2i (-1)^n  J_n(z).
\end{equation}
We can now conclude the desired result inserting \eqref{K Y and J result}, \eqref{J at negative arg} and \eqref{Y at negative arg} into \eqref{Error term with K}.
\end{proof}

\subsection{Ideal counting problem} 
We now consider the problem of counting the number of ideals $\mathfrak{a}$ in any number field $\K$ such that the norm of an ideal $\mathfrak{N}_\mathfrak{a} \leq x$. This problem can be considered as a special case of Theorem \ref{Theorem 1} by considering $m=1$ in Theorem \ref{Theorem 1}. We substitute $I_\K(x), H$ in place of $I_\K^1(x), H_1$ respectively for simplicity. We can conclude the following.

\begin{theorem}\label{Theorem 4.3}
Let $\K$ be any number field of degree $d$ with signature $(r_1, r_2)$ and discriminant $d_\K$. Then we have
\begin{align}\label{Eqn 4.8}
I_\K(x) =\zeta_\K(0) + Hx &- \frac{i^{r_1}D_\K^{1/2}}{(2\pi)^{r_1+r_2}}\sum\limits_{j=0}^{r_1}(-1)^j                \(\begin{matrix}
r_1\\ j
\end{matrix}\)
\sum\limits_{n=1}^\infty \frac{v_\K(n)}{n} \nonumber\\ 
&\times G_{0, \,\ d}^{r_1+r_2, 0}\(\begin{matrix}
- \\  \boldsymbol{1}_{r_1+r_2-1}, 0, \boldsymbol{1}_{r_2}
\end{matrix}
\bigg| (e^{\frac{i\pi}{2}})^{2j-r_1} \frac{(2\pi)^{d}}{D_{\K}} nx\).
\end{align}
\end{theorem}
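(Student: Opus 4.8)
The plan is to deduce Theorem~\ref{Theorem 4.3} directly from Theorem~\ref{Theorem 1} by setting $m=1$, since the ideal counting function $I_\K(x)$ is precisely $I_\K^1(x)$ in the notation of that theorem. First I would record the elementary identifications that the substitution $m=1$ forces: the Dirichlet coefficients satisfy $v_\K^1(n)=v_\K(n)$, the constant term is $\zeta_\K(0)^1=\zeta_\K(0)$, and the residue $H_1=\Res_{s=1}\zeta_\K(s)$ is abbreviated to $H$. With these identifications the main term $\zeta_\K(0)+Hx$ of \eqref{Eqn 4.8} reads off immediately from the main term $\zeta_\K(0)^m+H_m x$ appearing in Theorem~\ref{Theorem 1}.

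Next I would specialize the analytic part of the formula term by term. Putting $m=1$ in the prefactor $i^{mr_1}D_\K^{m/2}(2\pi)^{-mr_1-mr_2}$ gives $i^{r_1}D_\K^{1/2}(2\pi)^{-r_1-r_2}$; the outer sum over $j$ now runs from $0$ to $r_1$ with binomial weight $\binom{r_1}{j}$; and the phase $(e^{i\pi/2})^{2j-mr_1}$ becomes $(e^{i\pi/2})^{2j-r_1}$. For the Meijer $G$-function, the upper index $m(r_1+r_2)$ collapses to $r_1+r_2$, the second lower index $md$ becomes $d$, and the parameter list $\boldsymbol{1}_{mr_1+mr_2-1},0,\boldsymbol{1}_{mr_2}$ reduces to $\boldsymbol{1}_{r_1+r_2-1},0,\boldsymbol{1}_{r_2}$; finally the scaling factor $(2\pi)^{dm}D_\K^{-m}$ in the argument becomes $(2\pi)^{d}D_\K^{-1}$. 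Reading off each of these substitutions yields exactly the right-hand side of \eqref{Eqn 4.8}.

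In truth there is essentially nothing further to prove, provided one checks that the hypotheses underlying Theorem~\ref{Theorem 1} remain valid at $m=1$. The relevant Dirichlet series is $\sum_{n}v_\K(n)n^{-w}=\zeta_\K(w)$, whose analytic continuation and functional equation (Proposition~\ref{Prop 2.2}) are exactly those invoked in the proof of Theorem~\ref{Theorem 1}; and the convexity bound of Lemma~\ref{Convexity bound}, taken with exponent $m=1$, still forces the horizontal integrals to vanish as $T\to\infty$ once the Riesz order obeys \eqref{Assumption order} with $m=1$, i.e. $\rho\ge\frac{d}{2}(1-\mu)+1$. Since every ingredient in the proof of Theorem~\ref{Theorem 1} is uniform in $m$, no new estimate is needed and I anticipate no genuine obstacle. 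Should a self-contained derivation be preferred, one could instead repeat the entire argument of Theorem~\ref{Theorem 1}---the generalized Perron formula \eqref{Perron's formula}, the residue evaluation at $w=0,1$, the functional-equation manipulation culminating in \eqref{Equation 3.10}, and the identification of the resulting vertical integral with a Meijer $G$-function through \eqref{G-function}---verbatim with $\zeta_\K(w)$ in place of $\zeta_\K(w)^m$.
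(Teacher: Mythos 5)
Your proposal is correct and matches the paper exactly: the paper obtains Theorem \ref{Theorem 4.3} precisely by specializing Theorem \ref{Theorem 1} to $m=1$ and writing $I_\K(x)$, $H$ for $I_\K^1(x)$, $H_1$. Your term-by-term verification of the substitution is the whole argument, and no further work is needed.
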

The following theorem provides the result for the partial sum $I_\K(x)$ when $\K$ is totally real. 
\begin{theorem}\label{Theorem 4.4}
Let $\K$ be any totally real number field of degree $d \geq 2$ with discriminant $d_\K$. Then we have
\begin{align}\label{Eqn 4.9}
I_\K(x) = H x - \frac{i^{d}D_{\K}^{1/2}}{(2\pi)^{d}}\sum\limits_{j=0}^{d}(-1)^j \(\begin{matrix}
d\\ j
\end{matrix}\)
\sum\limits_{n=1}^\infty \frac{v_\K(n)}{n}  
 \, V\( (e^{\frac{i\pi}{2}})^{2j-d} \frac{(2\pi)^{d}}{D_{\K}} nx ; \boldsymbol{1}_{d-1}, 0 \).
\end{align}
\end{theorem}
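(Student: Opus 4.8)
The plan is to obtain Theorem~\ref{Theorem 4.4} as the totally real specialization of Theorem~\ref{Theorem 4.3}, which is itself the $m=1$ case of Theorem~\ref{Theorem 1}. First I would set $r_1=d$ and $r_2=0$ in \eqref{Eqn 4.8}, since a totally real field of degree $d$ has signature $(d,0)$. Under this substitution the prefactor $i^{r_1}D_\K^{1/2}/(2\pi)^{r_1+r_2}$ becomes $i^{d}D_\K^{1/2}/(2\pi)^{d}$, the binomial index runs from $0$ to $d$, and the Meijer $G$-function collapses: its upper index $r_1+r_2$ equals $d$, its lower index is $d$, and the trailing block $\boldsymbol{1}_{r_2}=\boldsymbol{1}_{0}$ is empty, so the lower parameter list reduces to $\boldsymbol{1}_{d-1},0$. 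This produces precisely $G_{0,d}^{d,0}$ with the parameters appearing in \eqref{Eqn 4.9}, evaluated at the argument $(e^{i\pi/2})^{2j-d}(2\pi)^{d}D_\K^{-1}nx$.

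Next I would convert each $G$-function to the Vorono\"i--Steen function. Since the specialized $G$-function has the shape $G_{0,d}^{d,0}$---an empty upper row together with $d$ lower parameters---the identity \eqref{Voronoi function} applies verbatim and rewrites every summand as $V\bigl(\,\cdot\,;\boldsymbol{1}_{d-1},0\bigr)$ at the same argument. Assembling the sum over $j$ and over $n$ then reproduces the infinite series in \eqref{Eqn 4.9} exactly.

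The one genuinely nontrivial point is the disappearance of the constant term $\zeta_\K(0)$, which is present in Theorem~\ref{Theorem 4.3} but absent from Theorem~\ref{Theorem 4.4}. To justify it I would appeal to the pole structure of the completed zeta function $\Lambda_\K(s)=D_\K^{s/2}\Gamma_\R(s)^{r_1}\Gamma_\C(s)^{r_2}\zeta_\K(s)$. By Proposition~\ref{Prop 2.2} the function $\Lambda_\K$ has a simple pole at $s=1$, and the functional equation \eqref{functional equation} then forces a simple pole at $s=0$ as well. At $s=0$ the archimedean factors contribute a pole of total order $r_1+r_2$, coming from the simple poles of $\Gamma(s/2)$ in $\Gamma_\R(s)$ and of $\Gamma(s)$ in $\Gamma_\C(s)$ at the origin. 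Matching orders forces $\zeta_\K(s)$ to vanish to order $r_1+r_2-1$ at $s=0$; for a totally real field of degree $d\ge 2$ this order equals $d-1\ge 1$, so $\zeta_\K(0)=0$ and the constant term drops out. The hypothesis $d\ge 2$ is exactly what makes this vanishing hold. This is the main obstacle: the remaining manipulations are pure bookkeeping of Meijer $G$-parameters, whereas the vanishing of $\zeta_\K(0)$ rests on the functional equation and the archimedean pole count rather than on the bare statement of Theorem~\ref{Theorem 4.3}.
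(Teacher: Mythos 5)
Your proposal is correct and follows the same route as the paper: specialize Theorem~\ref{Theorem 4.3} to $r_1=d$, $r_2=0$, identify the resulting $G_{0,d}^{d,0}$ with the Vorono\"i--Steen function via \eqref{Voronoi function}, and drop the constant term because $\zeta_\K(0)=0$. The paper simply cites Proposition~\ref{Prop 2.2} for the vanishing of $\zeta_\K(0)$ when $r_1+r_2>1$; your archimedean pole-count at $s=0$ is exactly the argument behind that citation.
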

\begin{proof}
The proof of the theorem follows immediately from Theorem \ref{Theorem 4.3} by replacing $r_1$ by $d$ and $r_2$ by $0$ in \eqref{Eqn 4.8}. It follows from proposition \ref{Prop 2.2} that for any number field of degree $d$ with signature $(r_1 , r_2)$, Dedekind zeta function vanishes at $0$ when $r_1 + r_2 >1$. Hence the term $\zeta_\K(0)$ does not appear in the formula for considering the number field with $r_1 \geq 2$ and $r_2 = 0$. 
\end{proof}
We next consider the result for a real quadratic field as a corollary of Theorem \ref{Theorem 4.4} and estimate the partial sum $I_\K(x)$. 
\begin{corollary}
Let $\K$ be any real quadratic field with discriminant $d_\K$. Then we have
\begin{align}
I_\K(x) = H x - \sum_{n=1}^\infty\frac{v_\K(n)}{n}
\left[Y_1\left(\frac{4\pi\,\sqrt{xn}}{D_\K^{1/2}}\,\right)+\frac{2}{\pi}K_1\left(\frac{4\pi\,\sqrt{xn}}{D_\K^{1/2}}\) \right]\sqrt{xn}.
\end{align}
\end{corollary}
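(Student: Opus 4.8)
The plan is to specialize Theorem \ref{Theorem 4.4} to the case $d=2$, since a real quadratic field is totally real of degree two with signature $(r_1,r_2)=(2,0)$, and then to convert the resulting Vorono\"i--Steen functions into Bessel functions exactly as in the proof of Corollary \ref{Voronoi proof}. Because $r_1+r_2=2>1$ here, the constant term $\zeta_\K(0)$ is already absent from the formula in Theorem \ref{Theorem 4.4}, so no separate treatment of it is needed.

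First I would set $d=2$ in \eqref{Eqn 4.9}. Since $i^2=-1$, the leading constant $-i^{d}D_\K^{1/2}/(2\pi)^{d}$ collapses to $+D_\K^{1/2}/(2\pi)^2$, and the finite sum over $j$ runs only over $j=0,1,2$ with binomial-weighted signs $+1,-2,+1$. The phase factors $(e^{i\pi/2})^{2j-2}$ evaluate to $e^{-i\pi}$, $1$, $e^{i\pi}$ respectively, so the error term becomes a single $n$-sum whose summand is the linear combination $V(e^{-i\pi}y;1,0)-2V(y;1,0)+V(e^{i\pi}y;1,0)$ with $y=(2\pi)^2nx/D_\K$. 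This is the exact analogue of the display preceding \eqref{Error term with K}, now with the argument rescaled by $D_\K^{-1}$.

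Next I would use \eqref{Voronoi function} together with \eqref{G and K function} to write $V(z;1,0)=2z^{1/2}K_1(2z^{1/2})$. Setting $w=4\pi\sqrt{nx}/D_\K^{1/2}$, so that $2y^{1/2}=w$ and $2(e^{\pm i\pi}y)^{1/2}=\pm i w$, the prefactor $D_\K^{1/2}/(2\pi)^2$ combines with the $z^{1/2}$ factors to produce the clean constant $\sqrt{nx}/\pi$, and the bracket reduces to $-iK_1(-iw)-2K_1(w)+iK_1(iw)$, matching \eqref{Error term with K} with $w$ in place of $4\pi\sqrt{nx}$.

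Finally I would invoke the Bessel identities \eqref{K Y and J result}, \eqref{J at negative arg} and \eqref{Y at negative arg} precisely as in the proof of Corollary \ref{Voronoi proof} to simplify $-iK_1(-iw)+iK_1(iw)$ to $-\pi Y_1(w)$; together with the $-2K_1(w)$ term this gives $-\pi Y_1(w)-2K_1(w)$, so the summand becomes $-\sqrt{nx}\,[Y_1(w)+\tfrac{2}{\pi}K_1(w)]$, which is exactly the claimed formula. The only genuine obstacle is the careful tracking of the square-root branch and the resulting phase when evaluating $K_1$ at the complex arguments $\pm iw$; but this bookkeeping is identical to the $\K=\Q$ computation already carried out in Corollary \ref{Voronoi proof}, so no new difficulty arises beyond keeping the $D_\K^{1/2}$ scaling consistent throughout.
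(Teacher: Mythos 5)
Your proposal is correct and follows essentially the same route as the paper, which proves this corollary by specializing Theorem \ref{Theorem 4.4} to $d=2$ and then repeating the Bessel-function manipulations of Corollary \ref{Voronoi proof} with the argument rescaled by $D_\K^{-1/2}$. Your explicit tracking of the constants (the collapse of $-i^{2}D_\K^{1/2}/(2\pi)^{2}$ to $+D_\K^{1/2}/(4\pi^2)$, the reduction of the prefactor to $\sqrt{nx}/\pi$, and the identity $-iK_1(-iw)+iK_1(iw)=-\pi Y_1(w)$) checks out and merely fills in details the paper leaves implicit.
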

\begin{proof}
The proof follows an argument similar to that given in the proof of the Corollary \ref{Voronoi proof}. 
\end{proof}
The next theorem provides the result for the partial sum $I_\K(x)$ when $\K$ is purely imaginary which follows directly from Theorem \ref{Theorem 4.3}.
\begin{theorem}\label{Theorem 4.6}
Let $\K$ be any purely imaginary number field of degree $d$ with discriminant $d_\K$. Then
\begin{align}
I_\K(x) =\zeta_\K(0) + H x - \(\frac{D_\K}{(2\pi)^d}\)^{1/2}
\sum\limits_{n=1}^\infty \frac{v_\K(n)}{n} 
G_{0, \, d}^{\frac{d}{2}, \, 0}\(\begin{matrix}
- \\  \boldsymbol{1}_{\frac{d}{2}-1}, 0, \boldsymbol{1}_{\frac{d}{2}}
\end{matrix}
\bigg| \frac{(2\pi)^{d}}{D_{\K}} nx\).
\end{align}
\end{theorem}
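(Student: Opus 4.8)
The plan is to obtain Theorem \ref{Theorem 4.6} as an immediate specialization of Theorem \ref{Theorem 4.3}, i.e.\ the $m=1$ case of the main result. A purely imaginary number field of degree $d$ is totally complex, so its signature is $(r_1, r_2) = (0, d/2)$ (in particular $d$ is even and $d/2 \geq 1$). First I would substitute $r_1 = 0$ and $r_2 = d/2$ directly into the identity \eqref{Eqn 4.8}, and then simplify each surviving factor.

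The first simplification is that the outer sum over $j$ runs from $0$ to $r_1 = 0$, hence collapses to the single term $j = 0$, for which $\binom{0}{0} = 1$ and $(-1)^0 = 1$. Next, with $r_1 = 0$ the leading factor $i^{r_1}$ equals $1$, and the phase attached to the argument, $(e^{\frac{i\pi}{2}})^{2j - r_1}$, becomes $(e^{\frac{i\pi}{2}})^{0} = 1$; thus the $G$-function is evaluated at the positive real number $\frac{(2\pi)^{d}}{D_\K}\,nx$. Collecting the remaining constants, the prefactor $\frac{D_\K^{1/2}}{(2\pi)^{r_1+r_2}} = \frac{D_\K^{1/2}}{(2\pi)^{d/2}}$ is rewritten as $\(\frac{D_\K}{(2\pi)^{d}}\)^{1/2}$, which matches the stated constant.

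It then remains only to track the indices of the Meijer $G$-function: the upper-left index $r_1 + r_2$ becomes $d/2$, giving $G_{0,d}^{d/2,0}$, while the lower parameter vector $\boldsymbol{1}_{r_1+r_2-1}, 0, \boldsymbol{1}_{r_2}$ becomes $\boldsymbol{1}_{d/2-1}, 0, \boldsymbol{1}_{d/2}$, exactly as displayed. A short consistency check confirms the bookkeeping, since this vector has length $(d/2 - 1) + 1 + d/2 = d = q$, as required by the definition \eqref{G-function}. Since every step is a substitution into an already-established identity, there is no substantive obstacle here; the only point demanding genuine care is the index arithmetic of the $G$-function parameters, together with the observation that—unlike in the totally real Theorem \ref{Theorem 4.4}, where $\zeta_\K(0)$ vanishes because $r_1 + r_2 > 1$—the term $\zeta_\K(0)$ must be retained in the present statement, as for an imaginary quadratic field ($d = 2$) one has $r_1 + r_2 = 1$ and $\zeta_\K(0)$ need not be zero.
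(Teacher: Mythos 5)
Your proposal is correct and matches the paper's own treatment exactly: the paper simply observes that Theorem \ref{Theorem 4.6} ``follows directly from Theorem \ref{Theorem 4.3}'' by setting $r_1=0$ and $r_2=d/2$, which is precisely the substitution and bookkeeping you carry out. Your added remark about retaining the $\zeta_\K(0)$ term is a sensible extra check consistent with the paper's statement.
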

We finally consider the special case of Theorem \ref{Theorem 4.6} for imaginary quadratic fields and estimate the partial sum $I_\K(x)$.
\begin{corollary}
Let $\K$ be any imaginary quadratic field with discriminant $d_\K$. Then we have
\begin{align}
I_\K(x) =\zeta_\K(0) + H x + \sum_{n=1}^\infty \frac{v_\K(n)}{n} J_1\(\frac{4\pi\sqrt{nx}}{D_\K^{1/2}}\)\sqrt{nx}.
\end{align}
\end{corollary}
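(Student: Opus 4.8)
The plan is to obtain this corollary as the $d=2$ specialization of Theorem~\ref{Theorem 4.6}, followed by a single reduction of the resulting Meijer $G$-function to a Bessel function of the first kind. An imaginary quadratic field has degree $d=2$ and signature $(r_1,r_2)=(0,1)$, so $d/2=1$. In Theorem~\ref{Theorem 4.6} the lower parameter block $\boldsymbol{1}_{d/2-1},0,\boldsymbol{1}_{d/2}$ therefore degenerates to $\boldsymbol{1}_{0},0,\boldsymbol{1}_{1}$, i.e. the ordered pair $0,1$ (the tuple $\boldsymbol{1}_{0}$ being empty), the order of the $G$-function becomes $G_{0,2}^{1,0}$, and the prefactor reads $\(\frac{D_\K}{(2\pi)^2}\)^{1/2}=\frac{D_\K^{1/2}}{2\pi}$. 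Thus the only genuine task is to evaluate $G_{0,2}^{1,0}\(\begin{smallmatrix}-\\0,1\end{smallmatrix}\big|\,\frac{(2\pi)^2}{D_\K}nx\)$ in closed form.

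For that I would invoke the table identity \eqref{G and J function} with $a=0$ and $b=1$. Since $a+b=1$ and $a-b=-1$, it gives $G_{0,2}^{1,0}\(\begin{smallmatrix}-\\0,1\end{smallmatrix}\big|\,z\)=z^{1/2}J_{-1}\(2z^{1/2}\)$. The next step is the parity relation $J_{-1}(w)=-J_1(w)$ for Bessel functions of integer order, which is precisely where the minus sign carried in front of the series in Theorem~\ref{Theorem 4.6} flips to the plus sign appearing in the statement. Substituting $z=\frac{(2\pi)^2}{D_\K}nx$, so that $z^{1/2}=\frac{2\pi}{D_\K^{1/2}}\sqrt{nx}$ and $2z^{1/2}=\frac{4\pi\sqrt{nx}}{D_\K^{1/2}}$, the constant factors collapse: the product $\frac{D_\K^{1/2}}{2\pi}\cdot\frac{2\pi}{D_\K^{1/2}}$ equals $1$, leaving exactly the asserted series $\sum_{n=1}^\infty\frac{v_\K(n)}{n}\sqrt{nx}\,J_1\(\frac{4\pi\sqrt{nx}}{D_\K^{1/2}}\)$, added to $\zeta_\K(0)+Hx$.

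I do not anticipate any serious obstacle; the argument is a direct specialization plus one identity lookup. The only points needing care are the correct reading of the degenerate empty tuple $\boldsymbol{1}_{0}$ at $d=2$, so that the function really is $G_{0,2}^{1,0}$ with lower parameters $0,1$ rather than an object of higher order, and the faithful tracking of the sign through $J_{-1}=-J_1$, which is what converts the subtracted series of Theorem~\ref{Theorem 4.6} into an added one. This mirrors the passage from Theorem~\ref{Theorem 4.4} to its real quadratic corollary, with the difference that the totally-real reduction there produced a combination of $Y_1$ and $K_1$, whereas here the purely imaginary case yields the single clean term $J_1$.
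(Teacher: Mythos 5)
Your proposal is correct and follows exactly the paper's route: specialize Theorem \ref{Theorem 4.6} to $d=2$ and reduce the resulting $G_{0,2}^{1,0}$ with lower parameters $0,1$ via identity \eqref{G and J function}. You are in fact more explicit than the paper, which omits the step $J_{-1}=-J_1$ that accounts for the sign change from the subtracted series to the added one.
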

\begin{proof}
It follows from Theorem \ref{Theorem 4.6} that since $\K$ is a purely imaginary number field of degree $2$, we have
\begin{align}
I_\K(x) = \zeta_\K(0) + H x - \frac{D_\K^{1/2}}{2\pi}
\sum\limits_{n=1}^\infty \frac{v_\K(n)}{n} 
G_{0, \, 2}^{1, \, 0}\(\begin{matrix}
- \\  0, 1
\end{matrix}
\bigg| \frac{4\pi^2}{D_{\K}} nx\).
\end{align}
We now apply \eqref{G and J function} to conclude our result.
\end{proof}
\section*{Acknowledgements}
The author would like to thank Prof. Ben Kane for helpful comments and discussions about this manuscript. The author would also like to express his gratitude to anonymous referee for the valuable comments about the manuscript.


\begin{thebibliography}{99}

\bibitem{Ayoub} R. G. Ayoub, {\em An introduction to the analytic theory of numbers}, American Math. Soc., (1963).

\bibitem{Bateman} H. Bateman, A. Erd\'elyi, {\em Higher Transcendental Functions}, {\bf Vol. I}, New York: McGraw-Hill, (1953).

\bibitem{BatemanE} H. Bateman, A. Erd\'elyi, {\em Higher Transcendental Functions}, {\bf Vol. II}, New York: McGraw-Hill, (1953).

\bibitem{Bourgain} J. Bourgain and N. Watt, {\em Mean square of zeta function, circle problem and divisor problem revisited}, Preprint, (2017), https://arxiv.org/abs/1709.04340. 

\bibitem{CM} K. Chandrasekharan and S. Minakshisundaram, {\em Typical means}, Oxford Univ. Press, Oxford, (1952).

\bibitem{Davenport} H. Davenport, {\em Multiplicative number theory}, 1st ed. Markham, Chicago (1967), 2nd ed. Springer, New York etc. (1980).

\bibitem{Hardy} G. H. Hardy and M. Riesz, {\em The general theory of Dirichlet's series}, Cambridge University Press, Cambridge (1915).

\bibitem{Iwaniec} H. Iwaniec and E. Kowalski, {\em Analytic Number Theory}, Amer. Math. Soc. Colloquium Publ. {\bf 53}, Amer. Math. Soc., Providence, RI, (2004).

\bibitem{KT} S. Kanemitsu and H. Tsukada, {\em Contributions to the theory of zeta-functions: the modular relation supremacy}, World Scientific, (2015).

\bibitem{Krishnarjun} Krishnarjun K., {\em On the number of  integral ideal of a number field}, Preprint, (2020), https://arxiv.org/abs/2002.06342.

\bibitem{Lang} S. Lang, {\em Algebraic number theory}, Addison-Wesley: Reading, MA, (1970).

\bibitem{Nowak} W. G. Nowak,{\em On the distribution of integral ideals in algebraic number theory fields}, Math. Nachr., {\bf 161}, (1993) 59--74.

\bibitem{Steen} S. W. P. Steen, {\em Divisor functions: their differential equations and recurrence formulae}, Proc. London Math. Soc. (2), {\bf 31} (1930), 47--80.

\bibitem{Takeda} W. Takeda, {\em Uniform bounds on the Piltz divisor problem over number fields}, Pacific J. Math. \textbf{301} (2019), 601--616.

\bibitem{Voronoi} G. Vorono\"i, {\em Sur une fonction transcendante et ses applications \`a la sommation de quelques s\'eries}, Ann. Ecole Norm. Sup., {\bf 21} (1904), 207--267, 459--533.

\bibitem{Wang} N. Wang, S. Banerjee, {\em On the product of Hurwitz zeta-functions}, Proc. Japan Acad. Ser. A (Math. Sci.), {\bf 93 (5)}, (2017), 31--36.

\end{thebibliography}
\end{document}